\newtheorem{theorem}{Theorem}[section]
\newtheorem{lemma}[theorem]{Lemma}
\newtheorem{proposition}[theorem]{Proposition}
\newtheorem{corollary}[theorem]{Corollary}
\numberwithin{equation}{section}
\theoremstyle{remark}
\newtheorem*{remark}{Remark}
\DeclareMathOperator{\li}{li}
\def\reals{\hbox{\rm I\kern-.18em R}}
\def\complexes{\hbox{\rm C\kern-.43em
\vrule depth 0ex height 1.4ex width .05em\kern.41em}}
\def\field{\hbox{\rm I\kern-.18em F}} 
\newcommand\blfootnote[1]{%
  \begingroup
  \renewcommand\thefootnote{}\footnote{#1}%
  \addtocounter{footnote}{-1}%
  \endgroup
}
\begin{document}

\title[Improving bounds on prime counting functions]{Improving bounds on prime counting functions by partial verification of the Riemann hypothesis}

\author{Daniel R. Johnston}

\begin{abstract}
    Using a recent verification of the Riemann hypothesis up to height $3\cdot 10^{12}$, we provide strong estimates on $\pi(x)$ and other prime counting functions for finite ranges of $x$. In particular, we get that $|\pi(x)-\li(x)|<\sqrt{x}\log x/8\pi$ for $2657\leq x\leq 1.101\cdot 10^{26}$. We also provide weaker bounds that hold for a wider range of $x$, and an application to an inequality of Ramanujan.
\end{abstract}

\maketitle

\blfootnote{\textit{Affiliation}: School of Science, The University of New South Wales, Canberra, Australia}
\blfootnote{\textit{Email}: daniel.johnston@adfa.edu.au}
\blfootnote{2010 \textit{Mathematics Subject Classification}. 11Y70 (Primary) 11M26 (Secondary).}
\blfootnote{\textit{Key words and phrases}. Riemann hypothesis; Riemann zeta-function; prime counting function; Ramanujan inequality.}

\section{Introduction}
In 1976, Schoenfeld \cite[Corollary 1]{schoenfeld1976sharper} proved that under assumption of the Riemann hypothesis,
\begin{equation}\label{schoeq}
    |\pi(x)-\li(x)|<\frac{\sqrt{x}}{8\pi}\log x,
\end{equation}
for $x\geq 2657$. Although a complete proof of the Riemann hypothesis remains out of reach, partial results can be used to prove \eqref{schoeq} for a finite range. In this direction, we prove that \eqref{schoeq} holds provided $x\geq 2657$ and
\begin{equation}\label{djbound}
    \frac{9.06}{\log\log x}\sqrt{\frac{x}{\log x}}\leq T.
\end{equation}
Here, $T$ is the largest known value such that the Riemann hypothesis is true for all zeros $\rho$ with $\Im(\rho)\in(0,T]$. A recent computation by Platt and Trudgian \cite{platt2021riemann} allows us to take $T=3\cdot 10^{12}$. Substituting this $T$ into \eqref{djbound} tells us that $\eqref{schoeq}$ holds for all $2657\leq x\leq 1.101\cdot 10^{26}$.

These results improve on earlier work by B{\"u}the \cite[Theorem 2]{buthe2016estimating}, who proved that \eqref{schoeq} holds provided $x\geq 2657$ and
\begin{equation}\label{buthebound}
    4.92\sqrt{\frac{x}{\log x}}\leq T.
\end{equation}
In particular, provided $T\geq 46$, \eqref{djbound} holds for a wider range of $x$ than $\eqref{buthebound}$. So by comparison, one only obtains $x\leq 2.169\cdot 10^{25}$ using \eqref{buthebound} with $T=3\cdot 10^{12}$.

To prove \eqref{djbound} we use B{\"u}the's original method with an additional iterative argument and several other optimisations. Similar to B{\"u}the, we prove corresponding bounds for the prime counting functions $\theta(x)$, $\psi(x)$ and $\Pi(x)$.

In Section \ref{sectnot} we list all the definitions and lemmas that we will use from \cite{buthe2016estimating}. In Section \ref{sectmain} we prove the main result. Then, in Section \ref{sectimprov} we discuss possible improvements and variations. For instance, we show (Theorem \ref{weakthm}) that the weaker bound $|\pi(x)-\li(x)|<\sqrt{x}\log x$ holds for $2\leq x\leq 2.165\cdot 10^{30}$. Finally, in Section \ref{appsec} we discuss an application to an inequality of Ramanujan.

\section{Notation and setup}\label{sectnot}
Throughout this paper, we work with the normalised prime counting functions
\begin{align}\label{chebfuncs}
    &\pi(x)=\sideset{}{^*}\sum_{p\leq x} 1, \qquad\qquad\ \ \Pi(x)=\sideset{}{^*}\sum_{p^m\leq x}\frac{1}{m},\notag\\
    &\theta(x)=\sideset{}{^*}\sum_{p\leq x}\log p, \qquad\psi(x)=\sideset{}{^*}\sum_{p^m\leq x}\log p,
\end{align}
where $\sum^*$ indicates that the last term in the sum is multiplied by $1/2$ when $x$ is an integer. However, we note that our main results (Theorem \ref{mainthm} and Theorem \ref{weakthm}), will also hold for the standard (unnormalised) prime counting functions. Our main focus will be on the function $\psi(x)$ since the other functions in \eqref{chebfuncs} can be related to $\psi(x)$ by simple bounding and partial summation arguments.

Following \cite[Section 2]{buthe2016estimating}, we define
\begin{align*}
    \ell_{c,\varepsilon}(\xi)=\frac{c}{\sinh(c)}\frac{\sin(\sqrt{(\xi\varepsilon)^2-c^2})}{\sqrt{(\xi\varepsilon)^2-c^2}},\quad a_{c,\varepsilon}(\rho)=\frac{1}{\ell_{c,\varepsilon}(i/2)}\ell_{c,\varepsilon}\left(\frac{\rho}{i}-\frac{1}{2i}\right)
\end{align*}
for $c,\varepsilon>0$. We will also make use of the auxiliary function $\psi_{c,\varepsilon}(x)$ defined on page 2484 of \cite{buthe2016estimating}. Notably, $\psi_{c,\varepsilon}(x)$ is a continuous approximation to $\psi(x)$. Moreover, for $x\geq 10$ and $0<\varepsilon\leq 10^{-4}$, we have \cite[Proposition 2]{buthe2018analytic}
\begin{equation}\label{vonmaneq}
    x-\psi_{c,\varepsilon}(x)=\sum_{\rho}\frac{a_{c,\varepsilon}(\rho)}{\rho}x^\rho+\Theta(2).
\end{equation}
Here, $\Theta(2)$ indicates a constant with absolute value less than 2, and the sum is taken over all the non-trivial zeros of the Riemann zeta-function and computed as
\begin{equation*}
    \lim_{T\to\infty}\sum_{|\Im(\rho)|<T}\frac{a_{c,\varepsilon}(\rho)}{\rho}x^\rho.
\end{equation*}
To obtain an expression for $|\psi(x)-x|$ we thus need bounds on $\sum_{\rho}\frac{a_{c,\varepsilon}(\rho)}{\rho}x^\rho$ and $|\psi(x)-\psi_{c,\varepsilon}(x)|$. We will make use of the following collection of lemmas taken from \cite{buthe2016estimating} with slight modifications.
\begin{lemma}[{\cite[Proposition 3]{buthe2016estimating}}]\label{butprop3}
    Let $x>1$, $\varepsilon\leq 10^{-3}$ and $c\geq 3$. Then
    \begin{equation*}
        \sum_{|\Im(\rho)|>\frac{c}{\varepsilon}}\left|a_{c,\varepsilon}(\rho)\frac{x^\rho}{\rho}\right|\leq 0.16\frac{x+1}{\sinh(c)}e^{0.71\sqrt{c\varepsilon}}\log(3c)\log\left(\frac{c}{\varepsilon}\right).
    \end{equation*}
    Moreover, if $a\in (0,1)$ with $a\frac{c}{e}\geq 10^3$, and the Riemann hypothesis holds for all zeros $\rho$ with $\Im(\rho)\in(0,\frac{c}{\varepsilon}]$, then
    \begin{equation*}
        \sum_{a\frac{c}{\varepsilon}<|\Im(\rho)|\leq\frac{c}{\varepsilon}}\left|a_{c,\varepsilon}(\rho)\frac{x^\rho}{\rho}\right|\leq\frac{1+11c\varepsilon}{\pi ca^2}\log\left(\frac{c}{\varepsilon}\right)\frac{\cosh(c\sqrt{1-a^2})}{\sinh(c)}\sqrt{x}.
    \end{equation*}
\end{lemma}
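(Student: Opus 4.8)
The plan is to follow B\"uthe's proof of \cite[Proposition 3]{buthe2016estimating} and to adjust its bookkeeping to the present hypotheses. Writing $\rho=\beta+i\gamma$, the identity $\rho/i-1/(2i)=\gamma+i(\half-\beta)$ gives
\[
\abs{a_{c,\varepsilon}(\rho)\frac{x^\rho}{\rho}}=\frac{x^{\beta}}{\abs{\ell_{c,\varepsilon}(i/2)}\,\abs{\rho}}\,\abs{\ell_{c,\varepsilon}\!\paranthesis{\gamma+i\paranthesis{\half-\beta}}},
\]
and since $s\mapsto\sinh(s)/s$ is increasing on $[0,\infty)$ one has $\ell_{c,\varepsilon}(i/2)=\frac{c}{\sinh c}\cdot\frac{\sinh\sqrt{c^2+\varepsilon^2/4}}{\sqrt{c^2+\varepsilon^2/4}}\geq 1$. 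Thus each summand is at most $x^{\beta}\abs{\gamma}^{-1}\abs{\ell_{c,\varepsilon}(\gamma+i(\half-\beta))}$, and both estimates reduce to a pointwise bound on $\ell_{c,\varepsilon}$ on the relevant arguments together with a summation of that bound against $x^{\beta}/\abs{\gamma}$ over the non-trivial zeros, the latter carried out by comparison with an integral via the Riemann--von Mangoldt count $N(t)=\frac{t}{2\pi}\log\frac{t}{2\pi e}+O(\log t)$ (with a suitable explicit error term).

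For the first (unconditional) bound the zeros satisfy $\abs{\gamma}>c/\varepsilon$, so for $\xi=\gamma+i(\half-\beta)$ the quantity $(\xi\varepsilon)^2-c^2$ has real part $(\gamma\varepsilon)^2-c^2+O(\varepsilon^2)$ and imaginary part of modulus at most $\abs{\gamma}\varepsilon^2$; from $\sqrt{A^2+B^2}-A\leq\abs{B}$ the imaginary part of $\sqrt{(\xi\varepsilon)^2-c^2}$ is $O(\varepsilon)$ away from the truncation, and, once the zeros with $\abs{\gamma}$ just above $c/\varepsilon$ are isolated, is bounded overall by a small multiple of $\sqrt{c\varepsilon}$. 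Using $\abs{\sin z}\leq\cosh(\Im z)$, $\cosh t\leq e^{t}$, and $\abs{\sqrt{(\xi\varepsilon)^2-c^2}}\geq((\gamma\varepsilon)^2-c^2)^{1/2}$ then gives, for the bulk of the zeros,
\[
\abs{\ell_{c,\varepsilon}\!\paranthesis{\gamma+i\paranthesis{\half-\beta}}}\leq\frac{c}{\sinh c}\cdot\frac{e^{0.71\sqrt{c\varepsilon}}}{\sqrt{(\gamma\varepsilon)^2-c^2}}.
\]
Pairing $\rho$ with $1-\bar\rho$ so that $x^{\beta}+x^{1-\beta}\leq x+1$ and passing to an integral, the main term becomes a multiple of $\frac{c}{\sinh c}\,e^{0.71\sqrt{c\varepsilon}}(x+1)\int_{c/\varepsilon}^{\infty}\frac{\log(t/2\pi)}{t\sqrt{t^2\varepsilon^2-c^2}}\,dt$, which the substitution $t\varepsilon=c\sec\phi$ evaluates in closed form; adding the contribution of the $O(\log t)$ error in $N$ and of the isolated near-truncation zeros (the source of the comparatively crude factor $\log(3c)$) yields the stated bound.

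For the second (conditional) bound, the assumption that RH holds up to height $c/\varepsilon$ forces $\beta=\half$ for every zero with $ac/\varepsilon<\abs{\gamma}\leq c/\varepsilon$, so $\xi=\gamma$ is real, $\abs{\gamma\varepsilon}\leq c$, and, writing $y=\sqrt{c^2-\gamma^2\varepsilon^2}\in[0,c\sqrt{1-a^2}]$ (the upper bound because $\gamma\varepsilon>ac$),
\[
\abs{a_{c,\varepsilon}(\rho)\frac{x^\rho}{\rho}}\leq\frac{c}{\sinh c}\cdot\frac{\sinh y}{y}\cdot\frac{\sqrt{x}}{\abs{\gamma}}.
\]
Passing to an integral and substituting $y=\sqrt{c^2-t^2\varepsilon^2}$ (so that $t\varepsilon^2\,dt=-y\,dy$ and $t^2\varepsilon^2=c^2-y^2$) turns the sum over the zeros into a multiple of $\frac{c\sqrt{x}}{\pi\sinh c}\int_{0}^{c\sqrt{1-a^2}}\frac{\sinh y}{c^2-y^2}\log\frac{\sqrt{c^2-y^2}}{2\pi\varepsilon}\,dy$; bounding $\log\frac{\sqrt{c^2-y^2}}{2\pi\varepsilon}\leq\log(c/\varepsilon)$, $\frac{1}{c^2-y^2}\leq\frac{1}{c^2-c^2(1-a^2)}=\frac{1}{c^2a^2}$, and $\int_{0}^{c\sqrt{1-a^2}}\sinh y\,dy\leq\cosh(c\sqrt{1-a^2})$ leaves a bound of the shape $\frac{\log(c/\varepsilon)}{\pi ca^2}\cdot\frac{\cosh(c\sqrt{1-a^2})}{\sinh c}\sqrt{x}$, with the remaining slack and the explicit error in $N$ (controlled using $ac/e\geq 10^3$) absorbed into the factor $1+11c\varepsilon$.

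The main obstacle I expect is the explicit pointwise control of the entire function $\ell_{c,\varepsilon}$ near the truncation height $\abs{\gamma}=c/\varepsilon$, where the factor $((\gamma\varepsilon)^2-c^2)^{-1/2}$ blows up and the contribution of the nearby zeros must be estimated by hand; this is what forces the crude $\log(3c)$. Secondarily, one must carry the explicit error term in the Riemann--von Mangoldt formula through both integral comparisons precisely enough to produce the numerical constants $0.16$, $0.71$ and $11$. The structural steps above are otherwise routine.
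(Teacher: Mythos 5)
This lemma is not proved in the paper at all. It is labelled \texttt{[Proposition 3, buthe2016estimating]} and is imported verbatim from B\"uthe's earlier work as one of the black-box ingredients in Section~\ref{sectnot}; the present paper contributes nothing to its proof. So there is no ``paper's own proof'' to compare against, and any attempt to reprove it is out of scope for this paper --- the correct move is simply to cite B\"uthe as the author does.

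That said, your sketch is a reasonable structural reconstruction of the kind of argument B\"uthe must use: the reduction via $\ell_{c,\varepsilon}(i/2)\geq 1$, the fact that on RH $\rho/i-1/(2i)=\gamma$ is real so $\sin$ becomes $\sinh$, the pairing $\rho\leftrightarrow 1-\bar\rho$ to produce the $x+1$ in the unconditional bound, and the comparison with an integral against the Riemann--von Mangoldt count with the substitution $y=\sqrt{c^2-t^2\varepsilon^2}$, after which $\frac{1}{c^2-y^2}\leq\frac{1}{c^2a^2}$ and $\int_0^{c\sqrt{1-a^2}}\sinh y\,\mathrm{d}y\leq\cosh(c\sqrt{1-a^2})$ give exactly the shape of the second estimate. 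The parts you flag as ``routine'' are, however, precisely where the work lies: isolating and bounding by hand the zeros with $\abs{\gamma}$ just above $c/\varepsilon$, controlling the $O(\log t)$ error in $N(t)$ with explicit constants, and extracting $0.16$, $0.71$ and $11$ rather than unspecified $O$-constants. Those numerics are what make the statement a usable explicit lemma, and the sketch does not attempt to verify them. If you want a proof that can be cited, you would need to carry out those computations; otherwise the honest thing is to do what the paper does and cite B\"uthe.
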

\begin{lemma}[{\cite[Lemma 3]{buthe2016estimating}}]\label{butlemma3}
    If $t_2\geq 5000$ then
    \begin{equation*}
        \sum_{0<|\Im(\rho)|\leq t_2}\frac{1}{|\Im(\rho)|}\leq\frac{1}{2\pi}\log^2\left(\frac{t_2}{2\pi}\right).
    \end{equation*}
\end{lemma}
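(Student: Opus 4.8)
The plan is to combine the conjugate symmetry of the non-trivial zeros with partial summation against the zero-counting function $N(t):=\#\{\rho:0<\Im(\rho)\le t\}$. Since the zeros occur in conjugate pairs, $\sum_{0<|\Im(\rho)|\le t_2}\frac{1}{|\Im(\rho)|}=2\sum_{0<\gamma\le t_2}\frac{1}{\gamma}$, where $\gamma$ runs over the positive ordinates of zeros, so it is enough to prove $\sum_{0<\gamma\le t_2}\frac{1}{\gamma}\le\frac{1}{4\pi}\log^2(t_2/2\pi)$.

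First I would write this sum as a Riemann--Stieltjes integral and integrate by parts. Using that $N(t)=0$ for $t<\gamma_1=14.134\ldots$ (the least positive ordinate),
\[
\sum_{0<\gamma\le t_2}\frac{1}{\gamma}=\frac{N(t_2)}{t_2}+\int_{\gamma_1}^{t_2}\frac{N(t)}{t^2}\,dt.
\]
Next I would feed in a classical explicit form of the Riemann--von Mangoldt formula, $N(t)\le\frac{t}{2\pi}\log\frac{t}{2\pi}-\frac{t}{2\pi}+\frac{7}{8}+R(t)$ with $R(t)=O(\log t)$ explicit (e.g.\ a Rosser-type bound, or a sharper modern refinement). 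The main term integrates in closed form, $\int_{\gamma_1}^{t_2}\frac{1}{t}\bigl(\frac{1}{2\pi}\log\frac{t}{2\pi}-\frac{1}{2\pi}\bigr)\,dt=\frac{1}{4\pi}\log^2\frac{t_2}{2\pi}-\frac{1}{4\pi}\log^2\frac{\gamma_1}{2\pi}-\frac{1}{2\pi}\log\frac{t_2}{\gamma_1}$, and when this is added to the boundary term $\frac{N(t_2)}{t_2}\le\frac{1}{2\pi}\log\frac{t_2}{2\pi}-\frac{1}{2\pi}+\frac{7}{8t_2}+\frac{R(t_2)}{t_2}$ the terms involving $\log t_2$ cancel, leaving $\frac{1}{4\pi}\log^2\frac{t_2}{2\pi}$ plus an absolute constant plus the remainder contributions $\frac{R(t_2)}{t_2}+\int_{\gamma_1}^{t_2}\frac{R(t)}{t^2}\,dt$, the latter bounded by the convergent integral $\int_{\gamma_1}^{\infty}\frac{R(t)}{t^2}\,dt$.

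It then remains to check that the accumulated constants do not overshoot, i.e.\ that $\sum_{0<\gamma\le t_2}\frac{1}{\gamma}\le\frac{1}{4\pi}\log^2\frac{t_2}{2\pi}+C$ with $C\le 0$. A blunt application of the $N(t)$-bound on all of $[\gamma_1,\infty)$ is slightly too lossy near $\gamma_1$ (where $N(t)$ is $0$ or $1$ but the crude bound is $O(1)$), so I would split the range at a moderate cutoff $T_0$: evaluate $\int_{\gamma_1}^{T_0}N(t)t^{-2}\,dt$ directly from the tabulated low-lying zeros of $\zeta$, and apply the analytic bound only on $[T_0,t_2]$, where $R(t)/t^2$ integrates to something tiny. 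This makes $C$ genuinely negative, and doubling gives the stated inequality for all $t_2\ge 5000$.

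The main obstacle is precisely this constant rather than anything structural: the left-hand side is asymptotic to $\frac{1}{2\pi}\log^2(t_2/2\pi)$, so the entire content is controlling the lower-order terms tightly enough, and the hypothesis $t_2\ge 5000$ is exactly what is needed to absorb them --- indeed the inequality already fails at $t_2=\gamma_1$, where the single term $2/\gamma_1\approx 0.14$ exceeds $\frac{1}{2\pi}\log^2(\gamma_1/2\pi)\approx 0.10$. The real work is choosing $T_0$ and an $N(t)$-bound sharp enough that $C$ comes out nonpositive.
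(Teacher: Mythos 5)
The paper does not prove this lemma; it is quoted verbatim from B\"uthe \cite[Lemma 3]{buthe2016estimating}, so there is no in-text argument to compare against. That said, your proposed route---reduce to positive ordinates by conjugate symmetry, integrate by parts against $N(t)$, insert an explicit Riemann--von Mangoldt estimate with a Rosser/Backlund-type remainder $R(t)=O(\log t)$, observe the cancellation of the $\log t_2$ terms, and split the range at a moderate $T_0$ so the low-lying zeros are handled exactly and the analytic bound is applied only where $\int R(t)t^{-2}\,dt$ is tiny---is exactly the standard argument for bounds of this shape, and is in substance what B\"uthe does and what Brent, Platt and Trudgian \cite[Lemma 8]{brent2020mean} later do when they push the range down to $t_2\geq 4\pi e$ (a fact the paper itself remarks on). Your diagnosis of the difficulty is also on point: a blunt application of the $N(t)$-bound over all of $[\gamma_1,\infty)$ does overshoot (the boundary-term constant is roughly $-0.02$ after including the $7/(8\gamma_1)$ contribution, while $\int_{\gamma_1}^\infty R(t)t^{-2}\,dt$ under Rosser's bound is around $0.19$), so the split at $T_0$ is not optional but essential. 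The only thing genuinely missing is the final numerical bookkeeping---choosing $T_0$, tabulating $\sum_{\gamma\leq T_0}1/\gamma$, and confirming the residual constant is nonpositive for $t_2\geq 5000$---but you have correctly identified that as the entire content of the proof, so the proposal has no structural gap.
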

\begin{lemma}[{\cite[Proposition 4]{buthe2016estimating}}]\label{butprop4}
    Let $x>100$, $\varepsilon<10^{-2}$ and 
    \begin{equation*}
        B_0=\frac{I_1(c)}{2\sinh(c)}\varepsilon xe^{-\varepsilon}>1,
    \end{equation*}
    where 
    \begin{equation*}
        I_1(x)=\sum_{n=0}^\infty\frac{(x/2)^{2n+1}}{n!\Gamma(n+2)}
    \end{equation*}
    is a modified Bessel function of the first kind. We then have
    \begin{equation*}
        |\psi(x)-\psi_{c,\varepsilon}(x)|\leq e^{2\varepsilon}\log(e^{\varepsilon}x)\left[\frac{\varepsilon x}{\log B_0}\frac{I_1(c)}{\sinh(c)}+2.01\varepsilon\sqrt{x}+\frac{1}{2}\log\log(2x^2)\right].
    \end{equation*}
\end{lemma}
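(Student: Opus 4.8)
The plan is to use the description of $\psi_{c,\varepsilon}$ as a mollification of $\psi$ in the logarithmic variable. Recall from \cite[Section 2]{buthe2016estimating} that $\psi_{c,\varepsilon}(x)$ is obtained from $\psi$ by the kernel attached to $\ell_{c,\varepsilon}$: since $\ell_{c,\varepsilon}$ is entire of exponential type $\varepsilon$ (for large real $\xi$ it behaves like $\tfrac{c}{\sinh c}\tfrac{\sin(\xi\varepsilon)}{\xi\varepsilon}$), the Paley--Wiener theorem puts this kernel on $[-\varepsilon,\varepsilon]$, and the identity $\int_{-\varepsilon}^{\varepsilon}I_{0}\!\big(c\sqrt{1-(t/\varepsilon)^{2}}\,\big)e^{i\xi t}\,dt=\tfrac{2\varepsilon\sin\sqrt{(\xi\varepsilon)^{2}-c^{2}}}{\sqrt{(\xi\varepsilon)^{2}-c^{2}}}$ identifies it as $K_{c,\varepsilon}(t)=\tfrac{c}{2\varepsilon\sinh c}I_{0}\!\big(c\sqrt{1-(t/\varepsilon)^{2}}\,\big)$, a non-negative symmetric kernel with $\int K_{c,\varepsilon}=\ell_{c,\varepsilon}(0)=1$. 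The only facts I will use are non-negativity, symmetry, unit mass, and the first-moment identity $\int_{0}^{\varepsilon}tK_{c,\varepsilon}(t)\,dt=\tfrac{\varepsilon I_{1}(c)}{2\sinh c}$ (which follows from $\int_{0}^{1}I_{0}(c\sqrt{w})\,dw=\tfrac{2}{c}I_{1}(c)$), so that $B_{0}=xe^{-\varepsilon}\int_{0}^{\varepsilon}tK_{c,\varepsilon}(t)\,dt$ is essentially the length of the window about $x$ on which the kernel averages.

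The key reduction is an exact identity. Because $\psi_{c,\varepsilon}(x)=\int\psi(xe^{t})K_{c,\varepsilon}(t)\,dt$ and $K_{c,\varepsilon}$ is symmetric of unit mass,
\begin{equation*}
\psi_{c,\varepsilon}(x)-\psi(x)=\int_{0}^{\varepsilon}\bigl[\psi(xe^{t})+\psi(xe^{-t})-2\psi(x)\bigr]K_{c,\varepsilon}(t)\,dt,
\end{equation*}
and since $\psi$ is non-decreasing, $\bigl|\psi(xe^{t})+\psi(xe^{-t})-2\psi(x)\bigr|\leq\psi(xe^{t})-\psi(xe^{-t})$, so the lemma reduces to bounding $\int_{0}^{\varepsilon}\bigl(\psi(xe^{t})-\psi(xe^{-t})\bigr)K_{c,\varepsilon}(t)\,dt$, a kernel average of the von Mangoldt mass of the short interval $(xe^{-t},xe^{t}]$. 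Split $\psi-\theta$ off. For the primes one has $\theta(xe^{t})-\theta(xe^{-t})\leq\log(xe^{\varepsilon})\bigl(\pi(xe^{t})-\pi(xe^{-t})\bigr)$, and an explicit Brun--Titchmarsh inequality bounds $\pi(xe^{t})-\pi(xe^{-t})$ by $\tfrac{2(xe^{t}-xe^{-t})}{\log(xe^{t}-xe^{-t})}$, which once $xe^{t}-xe^{-t}\geq B_{0}$ is at most $\tfrac{4x\sinh t}{\log B_{0}}$ --- linear in $t$ up to a bounded factor, using $\sinh t\leq\tfrac{\sinh\varepsilon}{\varepsilon}\,t$ on the support. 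Feeding this into the integral and using $\int_{0}^{\varepsilon}tK_{c,\varepsilon}(t)\,dt=\tfrac{\varepsilon I_{1}(c)}{2\sinh c}$ produces a term of the shape $\log(e^{\varepsilon}x)\cdot\tfrac{\varepsilon x}{\log B_{0}}\cdot\tfrac{I_{1}(c)}{\sinh c}$; the range where $xe^{t}-xe^{-t}$ is too small for Brun--Titchmarsh to be useful is handled separately, by keeping $\log(xe^{t}-xe^{-t})$ in the denominator and using its monotonicity, and is either negligible or again of the same shape --- this is where the hypothesis $B_{0}>1$ is used.

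The remaining two terms come from the prime powers $p^{k}$ with $k\geq2$, i.e.\ from $\sum_{k\geq2}\bigl[\theta\bigl((xe^{t})^{1/k}\bigr)-\theta\bigl((xe^{-t})^{1/k}\bigr)\bigr]$: applying the same short-interval estimates around $x^{1/k}$ and integrating against $K_{c,\varepsilon}$, the prime squares ($k=2$), whose interval has length $\asymp t\sqrt{x}$, yield the $2.01\varepsilon\sqrt{x}$ contribution, while summing the crude bounds over $3\leq k\leq\log_{2}(xe^{\varepsilon})$ yields the $\tfrac12\log\log(2x^{2})$ contribution. The factor $\log(e^{\varepsilon}x)$ is pulled out as a common prefactor, being the bound on $\log p$ used throughout, and the factor $e^{2\varepsilon}$ absorbs all the $e^{O(\varepsilon)}$ distortions coming from $e^{\pm t}\in[e^{-\varepsilon},e^{\varepsilon}]$ and $\sinh t\leq\tfrac{\sinh\varepsilon}{\varepsilon}\,t$ on the support; the constraints $x>100$ and $\varepsilon<10^{-2}$ make all of this comfortable.

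The step I expect to be the main obstacle is the short-interval prime count with fully explicit constants: one needs a version of Brun--Titchmarsh sharp enough that, after integrating against $K_{c,\varepsilon}$, the denominator is precisely $\log B_{0}$ and the constant in front of $\tfrac{\varepsilon x}{\log B_{0}}\tfrac{I_{1}(c)}{\sinh c}$ comes out no larger than $1$; this forces a careful treatment of the small-$t$ regime (where the interval is too short for Brun--Titchmarsh to apply cleanly and where $B_{0}>1$ enters) and of the $k=2$ prime squares (where the constant $2.01$ is landed). The Bessel-moment identities for $K_{c,\varepsilon}$ and the bookkeeping of the various $e^{O(\varepsilon)}$ factors into a single $e^{2\varepsilon}$ are routine but need to be done with some care.
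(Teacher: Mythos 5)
The paper does not prove this lemma: it is cited verbatim as \cite[Proposition 4]{buthe2016estimating} (specialised to $\alpha=0$), so there is no in-paper proof to compare against. What you have written is a reconstruction of B\"uthe's argument. The broad architecture --- interpreting $\psi_{c,\varepsilon}$ as a kernel mollification in the logarithmic variable, the first-moment identity $\int_0^\varepsilon tK_{c,\varepsilon}(t)\,dt=\tfrac{\varepsilon I_1(c)}{2\sinh c}$ (so that $B_0$ is the effective window length), reduction to a short-interval prime count via Brun--Titchmarsh, and separate handling of prime powers --- is the right picture, and those computations are correct.

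There is, however, a concrete gap in the central reduction, and it sits exactly where you say you expect trouble. You use
\[
\bigl|\psi(xe^t)+\psi(xe^{-t})-2\psi(x)\bigr|\leq\psi(xe^t)-\psi(xe^{-t}),
\]
then apply Brun--Titchmarsh to the two-sided interval $(xe^{-t},xe^t]$ of length $2x\sinh t$. After integrating this against $K_{c,\varepsilon}$, the main term comes out as $\tfrac{4x}{\log B_0}\int_0^\varepsilon tK_{c,\varepsilon}(t)\,dt=\tfrac{2\varepsilon x}{\log B_0}\tfrac{I_1(c)}{\sinh c}$ up to $e^{O(\varepsilon)}$, which is \emph{twice} what the lemma asserts. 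The factor of $2$ is structural, not a matter of tightening constants. The missing observation is that the two one-sided deviations have opposite signs: writing
\[
\psi_{c,\varepsilon}(x)-\psi(x)=A-B,\qquad A=\int_0^\varepsilon[\psi(xe^t)-\psi(x)]K_{c,\varepsilon}(t)\,dt\geq0,\qquad B=\int_0^\varepsilon[\psi(x)-\psi(xe^{-t})]K_{c,\varepsilon}(t)\,dt\geq0,
\]
one has $|\psi_{c,\varepsilon}(x)-\psi(x)|\leq\max(A,B)$, not $A+B$. Bounding the larger one-sided integral --- a one-sided interval $(x,xe^t]$ of length $x(e^t-1)$ --- via Brun--Titchmarsh then yields $\tfrac{\varepsilon x}{\log B_0}\tfrac{I_1(c)}{\sinh c}$ as required; indeed, substituting the definition of $B_0$ shows the lemma's main term equals $\tfrac{2B_0e^\varepsilon}{\log B_0}$, the Brun--Titchmarsh bound for a single one-sided interval of length $B_0e^\varepsilon$, which confirms that only one side is being counted. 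With the inequality $|a+b|\leq\max(|a|,|b|)$ for $a\geq0\geq b$ in place of your symmetrised bound, the rest of your outline (small-$t$ regime using $B_0>1$, the $k=2$ prime squares giving $2.01\varepsilon\sqrt{x}$, higher prime powers giving $\tfrac12\log\log(2x^2)$, the $e^{2\varepsilon}$ absorbing the distortion factors) is the correct bookkeeping.
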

\begin{lemma}[{\cite[Proposition 5]{buthe2016estimating}}]\label{butprop5}
    For $c_0>0$ let
    \begin{equation*}
        D(c_0)=\sqrt{\frac{\pi c_0}{2}}\frac{I_1(c_0)}{\sinh(c_0)}.
    \end{equation*}
    Then
    \begin{equation*}
        \frac{D(c_0)}{\sqrt{2\pi c}}\leq \frac{I_1(c)}{2\sinh(c)}\leq\frac{1}{\sqrt{2\pi c}}
    \end{equation*}
    holds for all $c\geq c_0$.
\end{lemma}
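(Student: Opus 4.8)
The plan is to collapse both inequalities into a single monotonicity statement. Put $f(c)=\sqrt{2\pi c}\,\dfrac{I_1(c)}{2\sinh(c)}$; a one-line computation gives $D(c_0)=f(c_0)$, so the two asserted bounds are precisely $f(c_0)\le f(c)$ and $f(c)\le 1$ for $c\ge c_0$. Since $c_0$ ranges over all of $(0,\infty)$, it suffices to prove that $f$ is non-decreasing on $(0,\infty)$ and that $\lim_{c\to\infty}f(c)=1$: the first gives $f(c)\ge f(c_0)$, and the two together force $f(c)\le 1$. The limit is the easy half, being immediate from $I_1(c)\sim e^{c}/\sqrt{2\pi c}$ and $\sinh(c)\sim e^{c}/2$, which give $f(c)=1+O(1/c)$.

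For the monotonicity I would take the logarithmic derivative. With the recurrence $I_1'(c)=I_0(c)-\tfrac1c I_1(c)$ (immediate from the series defining $I_1$),
\[
\frac{f'(c)}{f(c)}=\frac{1}{2c}+\frac{I_1'(c)}{I_1(c)}-\coth(c)=\frac{I_0(c)}{I_1(c)}-\frac{1}{2c}-\coth(c),
\]
so $f'\ge 0$ on $(0,\infty)$ is equivalent to $\dfrac{I_0(c)}{I_1(c)}\ge\coth(c)+\dfrac{1}{2c}$, and, after clearing the positive denominators, to
\[
2c\,\sinh(c)\,I_0(c)\ \ge\ \bigl(\sinh(c)+2c\cosh(c)\bigr)\,I_1(c)\qquad(c>0).
\]
To prove this last inequality I would bring in the auxiliary function $u(c)=I_0(c)\sinh(c)-I_1(c)\cosh(c)$. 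Using $I_0'=I_1$ together with the recurrence, the derivative telescopes to $u'(c)=I_1(c)\cosh(c)/c$, so that $u(c)=\int_0^c I_1(t)\cosh(t)/t\,dt>0$ for $c>0$. The inequality above is exactly $2c\,u(c)\ge\sinh(c)I_1(c)$, i.e.\ $v(c)\ge 0$ where $v(c)=2c\,u(c)-\sinh(c)I_1(c)$. Here $v(0)=0$, and differentiating once more and using $I_1(c)\cosh(c)-I_0(c)\sinh(c)=-u(c)$ gives $v'(c)=u(c)+\sinh(c)I_1(c)/c>0$ for $c>0$. Hence $v>0$ on $(0,\infty)$, the displayed inequality holds strictly, and $f$ is in fact strictly increasing.

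The only genuine obstacle is locating the right auxiliary functions. Taken at face value, $I_0(c)/I_1(c)\ge\coth(c)+1/(2c)$ looks delicate because it is asymptotically tight: both sides equal $1+\tfrac1{2c}+o(1/c)$ as $c\to\infty$, so a crude Bessel bound will not close it, and one would otherwise be pushed into a second-order asymptotic analysis for large $c$ backed by a numerical check on a compact interval. Passing to $u$ and then $v$ removes all of that: each step is just "vanishes at the origin, has positive derivative". Everything else, namely the logarithmic-derivative identity and the two derivative computations, is mechanical once the Bessel relations $I_0'=I_1$ and $I_1'=I_0-I_1/c$ are in hand.
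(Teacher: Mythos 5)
Your proof is correct. Note that the paper itself does not prove this lemma---it imports it verbatim from B\"uthe's paper and uses it as a black box---so there is no ``paper's proof'' to compare against; what you have produced is a self-contained derivation of the cited result.

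The reduction to the single claim that $f(c)=\sqrt{2\pi c}\,\tfrac{I_1(c)}{2\sinh c}$ is increasing with limit $1$ is exactly the right move, and all the calculus checks out. Taking logarithmic derivatives with $I_0'=I_1$ and $I_1'=I_0-I_1/c$ gives $\tfrac{f'}{f}=\tfrac{I_0}{I_1}-\tfrac{1}{2c}-\coth c$ as you state. Your identity for $u(c)=I_0\sinh c-I_1\cosh c$ telescopes correctly to $u'(c)=I_1(c)\cosh(c)/c$, and indeed $u(0^+)=0$ so $u>0$; and $v(c)=2cu(c)-\sinh(c)I_1(c)$ satisfies $v(0^+)=0$ with $v'(c)=u(c)+\sinh(c)I_1(c)/c>0$, closing the monotonicity claim. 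Combined with the Hankel asymptotic $I_1(c)\sim e^c/\sqrt{2\pi c}$ and $\sinh c\sim e^c/2$ giving $f(c)\to 1$, this yields both halves of the stated inequality (the left half with equality at $c=c_0$, as required). Your remark about why the naive route fails is on point: the inequality $I_0/I_1\ge\coth c+1/(2c)$ is asymptotically sharp to second order, so a direct Bessel bound would be insufficient; passing to $u$ and then $v$ collapses everything to sign checks. The only nontrivial external input is the standard large-$c$ asymptotic for $I_1$, which is fine to cite.
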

In particular, note that for Lemma \ref{butprop4} we have taken the case $\alpha=0$ in \cite[Proposition 4]{buthe2016estimating}. Moreover, we remark that Brent, Platt and Trudgian \cite[Lemma 8]{brent2020mean} recently showed that Lemma \ref{butlemma3} holds more generally for $t_2\geq 4\pi e$.

\section{Proof of the main result}\label{sectmain}
We begin by stating the bounds obtained using B{\"u}the's result \cite[Theorem 2]{buthe2016estimating} and Platt and Trudgian's computation \cite{platt2021riemann}.
\begin{proposition}\label{buthethm}
    The following estimates hold:
    \begin{align}
        |\psi(x)-x|&<\frac{\sqrt{x}}{8\pi}\log^2(x),&\text{for }59\leq x\leq 2.169\cdot10^{25},\notag\\
        |\theta(x)-x|&<\frac{\sqrt{x}}{8\pi}\log^2(x),&\text{for }599\leq x\leq 2.169\cdot10^{25},\notag\\
        |\psi(x)-x|&<\frac{\sqrt{x}}{8\pi}\log x(\log x-3),&\text{for }5000\leq x\leq 2.169\cdot10^{25},\label{psiminus3}\\
        |\theta(x)-x|&<\frac{\sqrt{x}}{8\pi}\log x(\log x-2),&\text{for }5000\leq x\leq 2.169\cdot10^{25},\label{thetaminus2}\\
        |\Pi(x)-\li(x)|&<\frac{\sqrt{x}}{8\pi}\log x,&\text{for }59\leq x\leq 2.169\cdot10^{25},\notag\\
        |\pi(x)-\li(x)|&<\frac{\sqrt{x}}{8\pi}\log x,&\text{for }2657\leq x\leq 2.169\cdot10^{25}.\notag
    \end{align}
\end{proposition}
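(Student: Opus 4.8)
The plan is to combine B\"uthe's explicit formula \eqref{vonmaneq} with the lemmas from Section \ref{sectnot}, applied with the parameter choice $c/\varepsilon = T = 3\cdot 10^{12}$, so that the Riemann hypothesis hypothesis in Lemma \ref{butprop3} is satisfied for all zeros with $\Im(\rho)\in(0,T]$. Concretely, I would fix a convenient $c$ (B\"uthe's choice, e.g.\ $c$ around $30$, with $\varepsilon=c/T$ correspondingly tiny, so that $\varepsilon\leq 10^{-3}$ and the side conditions $c\geq 3$, $a c/e\geq 10^3$ of Lemma \ref{butprop3} hold) and split the zero-sum $\sum_\rho a_{c,\varepsilon}(\rho)x^\rho/\rho$ into three ranges: $|\Im(\rho)|\leq a c/\varepsilon$, $a c/\varepsilon < |\Im(\rho)|\leq c/\varepsilon$, and $|\Im(\rho)| > c/\varepsilon$. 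On the first range, RH is known to hold (since $c/\varepsilon=T$) so each $|a_{c,\varepsilon}(\rho)x^\rho/\rho|\leq |a_{c,\varepsilon}(\rho)|\sqrt{x}/|\Im(\rho)|$, and using $|a_{c,\varepsilon}(\rho)|\leq 1$ there (a property of the kernel $\ell_{c,\varepsilon}$ B\"uthe establishes) together with Lemma \ref{butlemma3} gives a bound of the shape $\frac{\sqrt{x}}{2\pi}\log^2(T/2\pi)$. The middle range is controlled by the second estimate of Lemma \ref{butprop3}, and the tail by the first estimate of Lemma \ref{butprop3}; both of these are $O(\sqrt{x})$ or smaller once $x$ is bounded above, which is where the upper limit $x\leq 2.169\cdot 10^{25}$ enters.

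Next I would assemble the estimate for $|\psi(x)-x|$. Writing $\psi(x)-x = (\psi(x)-\psi_{c,\varepsilon}(x)) + (\psi_{c,\varepsilon}(x)-x)$, the first difference is bounded by Lemma \ref{butprop4} (with the help of Lemma \ref{butprop5} to control $I_1(c)/\sinh(c)$ and to check $B_0>1$), and the second by \eqref{vonmaneq} together with the three-part zero-sum bound plus the $\Theta(2)$ term. Collecting everything yields an inequality of the form $|\psi(x)-x| < \frac{\sqrt{x}}{2\pi}\log^2(T/2\pi) + (\text{lower-order terms in }x)$, valid on some interval $x_0\leq x\leq X$. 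The point of choosing $c$ (and hence the split parameter $a$) is to make the main term as close to $\frac{\sqrt{x}}{8\pi}\log^2 x$ as possible at the top of the range; since $\log(T/2\pi)\approx 28.7$ and one needs $\tfrac{1}{2\pi}\cdot 28.7^2 \lesssim \tfrac{1}{8\pi}\log^2 X$, i.e.\ $\log X \gtrsim 57.4$, this forces $X$ to be just above $10^{25}$, consistent with the stated $2.169\cdot 10^{25}$. The $\theta$ bound follows from $0\leq \psi(x)-\theta(x)\leq 1.43\sqrt{x}+1.05 x^{1/3}$-type estimates (valid in the relevant range), and the $\theta$ version of the $\log x(\log x-c)$ shape requires noting that the crude difference $\psi-\theta$ can be absorbed into lowering the constant inside the bracket from $3$ to $2$.

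Then I would derive the $\Pi$ and $\pi$ estimates by partial summation. For $\Pi(x)-\li(x)$, one uses the identity relating $\Pi$ to $\psi$ via $\Pi(x) = \sum_{m\geq 1}\frac{1}{m}\pi(x^{1/m})$ and the corresponding integral representation $\li(x) = \int_2^x dt/\log t + \text{const}$, reducing the bound on $\Pi-\li$ to the bound on $\psi-x$ through an integration against $dt/\log t$; the logarithmic smoothing there is exactly what turns $\log^2 x$ into $\log x$. Finally $\pi(x)-\li(x)$ is obtained from $\Pi(x)-\li(x)$ by subtracting off the prime-power contribution $\Pi(x)-\pi(x) = \sum_{m\geq 2}\frac{1}{m}\pi(x^{1/m})$, which is positive and of size $\tfrac12\sqrt{x}/\log x + O(x^{1/3})$; this is what lifts the lower cutoff to $x\geq 2657$ (below which the sign and size of the prime-power term would spoil the inequality), matching Schoenfeld's original threshold in \eqref{schoeq}. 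The main obstacle I anticipate is bookkeeping the explicit constants cleanly enough that the three-part zero-sum bound, the Lemma \ref{butprop4} error, and the $\Theta(2)$ term together stay below $\frac{\sqrt{x}}{8\pi}\log^2 x$ all the way up to $x = 2.169\cdot 10^{25}$ — in particular, choosing $c$ and $a$ to balance the tail/middle-range contributions against the loss in the main term near the top of the interval, and verifying the lower-endpoint cases ($x=59,599,2657,5000$) where the asymptotically negligible terms are not yet negligible.
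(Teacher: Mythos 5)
The paper's proof of Proposition~\ref{buthethm} is a one-line citation: substitute $T=3\cdot 10^{12}$ from Platt--Trudgian \cite{platt2021riemann} into B\"uthe's Theorem~2 \cite{buthe2016estimating}, which already packages all six bounds under the single condition $4.92\sqrt{x/\log x}\leq T$; the two ``extra'' estimates \eqref{psiminus3} and \eqref{thetaminus2} are then extracted as intermediate steps inside B\"uthe's own argument, as the paper explicitly notes. Your proposal instead re-derives B\"uthe's theorem from the lemmas of Section~\ref{sectnot}. That is a legitimate alternative, and it does correctly mirror the machinery the paper itself later deploys for the genuinely new Theorem~\ref{mainthm}: the three-way split of the zero sum, the bound $|a_{c,\varepsilon}(\rho)|\le 1$ on the low range combined with Lemma~\ref{butlemma3}, Lemma~\ref{butprop3} for the middle and tail, Lemma~\ref{butprop4}/\ref{butprop5} for $|\psi-\psi_{c,\varepsilon}|$, and partial summation to pass to $\theta$, $\Pi$, $\pi$. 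But you are reproving the input rather than doing what the paper does at this step.

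There is also one substantive slip in the sketch. The heuristic $\frac{1}{2\pi}\log^2(T/2\pi)\lesssim\frac{1}{8\pi}\log^2 X$ gives a \emph{lower} bound on $x$, not an upper one: it says the low-range contribution cannot be absorbed into $\frac{\sqrt{x}}{8\pi}\log^2 x$ until $x$ is at least about $(T/2\pi)^2$. The upper threshold $2.169\cdot 10^{25}$ in fact comes from the opposite constraint. In B\"uthe's argument (and in Theorem~\ref{mainthm}) the parameters $c$ and $\varepsilon$ are functions of $x$, chosen so that $c(x)/\varepsilon(x)$ grows like a constant times $\sqrt{x/\log x}$; the analysis is only valid while $c(x)/\varepsilon(x)\leq T$, and solving $4.92\sqrt{x/\log x}=3\cdot 10^{12}$ is precisely what yields $2.169\cdot 10^{25}$. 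With a fixed $c\approx 30$ and $\varepsilon=c/T$ as you propose, the upper cut-off would instead be forced by the tail term in Lemma~\ref{butprop3}, whose $(x+1)/\sinh(c)$ factor eventually outgrows $\sqrt{x}\log^2 x$, but at a much larger threshold than $10^{25}$. Relatedly, a fixed $c$ cannot reach the stated low endpoints: with $\varepsilon\approx 10^{-11}$ the low-range term $\frac{\sqrt{x}}{2\pi}\log^2\bigl(\sqrt{2c}/(2\pi\varepsilon)\bigr)$ only drops below $\frac{\sqrt{x}}{8\pi}\log^2 x$ once $x$ is around $10^{22}$, so the whole range $59\leq x\lesssim 10^{22}$ would still rest on the heavy numerical verifications underlying B\"uthe's paper --- a gap that ``verify the lower-endpoint cases'' considerably understates.
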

\begin{proof}
    The $2.169\cdot 10^{25}$ comes from substituting $T=3\cdot 10^{12}$ \cite{platt2021riemann} into \cite[Theorem 2]{buthe2016estimating}. Note that \eqref{psiminus3} and \eqref{thetaminus2} do not appear in the statement of B{\"u}the's theorem but are established as intermediary steps in the proof.
\end{proof}

We now prove the main result of this paper.
\begin{theorem}\label{mainthm}
    Let $T>0$ be such that the Riemann hypothesis holds for zeros $\rho$ with $0<\Im(\rho)\leq T$. Then, under the condition $\frac{9.06}{\log\log x}\sqrt{\frac{x}{\log x}}\leq T$, the following estimates hold:
    \begin{align}
        |\psi(x)-x|&<\frac{\sqrt{x}}{8\pi}\log^2x,&\text{for }x\geq 59,\label{psiin}\\
        |\theta(x)-x|&<\frac{\sqrt{x}}{8\pi}\log^2x,&\text{for }x\geq 599,\label{thetain}\\
        |\Pi(x)-\li(x)|&<\frac{\sqrt{x}}{8\pi}\log x,&\text{for }x\geq 59\label{piin1},\\
        |\pi(x)-\li(x)|&<\frac{\sqrt{x}}{8\pi}\log x,&\text{for }x\geq 2657\label{piin2}.
    \end{align}
\end{theorem}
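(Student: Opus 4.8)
The plan is to start from the explicit-formula expression \eqref{vonmaneq} for $x-\psi_{c,\varepsilon}(x)$ and bound the zero sum $\sum_\rho a_{c,\varepsilon}(\rho)x^\rho/\rho$ by splitting it according to the height of $\Im(\rho)$, exactly as in B\"uthe's argument but with the parameters $c$ and $\varepsilon$ chosen to depend on $x$ rather than being fixed. The idea is to pick $\varepsilon$ so that $c/\varepsilon = T$ (i.e. we use the full verified range of zeros), and then optimise $c$ as a function of $x$. With this choice, the sum over $|\Im(\rho)| > c/\varepsilon = T$ is controlled by the unconditional bound in Lemma \ref{butprop3}, the sum over $a\frac{c}{\varepsilon} < |\Im(\rho)| \le \frac{c}{\varepsilon}$ (for a well-chosen $a \in (0,1)$) is controlled by the conditional bound in Lemma \ref{butprop3} (legitimate since RH holds up to height $T = c/\varepsilon$), and the remaining sum over $0 < |\Im(\rho)| \le a\frac{c}{\varepsilon}$ is handled by noting $|a_{c,\varepsilon}(\rho)| \le 1$ there together with Lemma \ref{butlemma3}. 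Combining these with the approximation error $|\psi(x)-\psi_{c,\varepsilon}(x)|$ from Lemma \ref{butprop4} (using Lemma \ref{butprop5} to simplify the Bessel factors) yields, after collecting terms, a bound of the shape $|\psi(x)-x| \le \frac{\sqrt{x}}{8\pi}\log^2 x \cdot (1 - \text{something positive})$, valid once $x$ is large enough and $c/\varepsilon \le T$; the condition $c/\varepsilon \le T$ translates, after the optimal choice of $c \asymp \log\log x$ or similar, into the clean hypothesis $\frac{9.06}{\log\log x}\sqrt{x/\log x} \le T$.

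Concretely, I would carry out the following steps. \textbf{Step 1:} fix the relation $\varepsilon = c/T$ and write every estimate in terms of $c$ and $x$ only. \textbf{Step 2:} feed the three pieces above into \eqref{vonmaneq}, obtaining an explicit inequality $|\psi(x)-x| \le F(x,c)$ where $F$ collects the Lemma \ref{butprop3}, Lemma \ref{butlemma3}, and Lemma \ref{butprop4} contributions; the dominant term is the conditional one, roughly $\frac{\sqrt{x}}{\pi c}\log(c/\varepsilon)\frac{\cosh(c\sqrt{1-a^2})}{\sinh(c)}$, while the unconditional tail is exponentially small in $c$ (via $1/\sinh(c)$ against the polynomial growth in $c/\varepsilon \le T$) provided $x$ is not too large relative to $T$. \textbf{Step 3:} choose $a = a(c)$ to balance the conditional term (this is where the $\cosh(c\sqrt{1-a^2})/\sinh(c)$ factor is tuned), then choose $c = c(x)$ — growing slowly, like a constant times $\log\log x$ — to make $F(x,c) < \frac{\sqrt{x}}{8\pi}\log^2 x$ with room to spare. \textbf{Step 4:} the iterative refinement mentioned in the introduction: once a first bound $|\psi(x)-x| < \frac{\sqrt{x}}{8\pi}\log^2 x$ is in hand it can be fed back — e.g.\ into the estimate for $B_0$ or into partial-summation passages — to sharpen the constants and extend the range; repeat until the constant $9.06$ stabilises. \textbf{Step 5:} verify the condition $c(x)/\varepsilon = c(x)T/c(x)$... more precisely check that the requirement ``$c/\varepsilon \le T$'' together with the side conditions $a\frac{c}{e} \ge 10^3$, $c \ge 3$, $\varepsilon \le 10^{-3}$, $B_0 > 1$ etc.\ all reduce to (or are implied by) $\frac{9.06}{\log\log x}\sqrt{x/\log x} \le T$ in the relevant range of $x$.

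For the passage from $\psi$ to the other functions: \eqref{thetain} follows from \eqref{psiin} by subtracting the prime-power contribution $\psi(x) - \theta(x) = \sum_{m\ge 2}\theta(x^{1/m})$, which is $O(\sqrt{x})$ and hence absorbed by the slack in \eqref{psiin} for $x \ge 599$ (with small values $59 \le x < 599$ checked numerically or covered by Proposition \ref{buthethm}). The bound \eqref{piin1} for $\Pi(x)$ comes from \eqref{psiin} by partial summation, $\Pi(x) = \int_2^x \frac{d\psi(t)}{\log t} + O(1)$, comparing against $\li(x) = \int_2^x \frac{dt}{\log t} + O(1)$; and \eqref{piin2} for $\pi(x)$ follows from \eqref{piin1} since $0 \le \Pi(x) - \pi(x) = \sum_{m\ge 2}\frac{1}{m}\pi(x^{1/m}) \le \sqrt{x}\log x / (something)$ is small enough to be swallowed by the difference between the $\Pi$-bound and the target $\pi$-bound once $x \ge 2657$, again with a finite numerical check at the low end handled by Proposition \ref{buthethm}.

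The main obstacle I expect is \textbf{Step 3}, the simultaneous optimisation of $a$ and $c$: one needs the conditional contribution (which carries the $\cosh(c\sqrt{1-a^2})/\sinh(c)$ factor, not automatically small) to come out below $\frac{1}{8\pi}\log^2 x$ while keeping $c/\varepsilon \le T$, and this is a genuine two-parameter calculus problem whose outcome dictates the constant $9.06$. The iterative argument in Step 4 compounds the difficulty, since each round changes the admissible constants and the effective range, so care is needed to ensure the iteration converges and that the side conditions from Lemmas \ref{butprop3}--\ref{butprop5} remain satisfied throughout. By contrast, the deductions of \eqref{thetain}, \eqref{piin1} and \eqref{piin2} from \eqref{psiin} are routine, following B\"uthe's and Schoenfeld's classical bounding and partial-summation arguments almost verbatim.
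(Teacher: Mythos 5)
Your overall framework is correct and matches the paper: split the zero sum $\sum_\rho a_{c,\varepsilon}(\rho)x^\rho/\rho$ into three ranges, bound the two outer pieces by Lemma \ref{butprop3}, the inner piece by $|a_{c,\varepsilon}(\rho)|\le 1$ together with Lemma \ref{butlemma3}, control $|\psi-\psi_{c,\varepsilon}|$ by Lemmas \ref{butprop4} and \ref{butprop5}, take $a=\sqrt{2/c}$ so that $\cosh(c\sqrt{1-a^2})/\sinh(c)\approx e^{-1}$, and then pass from $\psi$ to $\theta$, $\Pi$, $\pi$. However, there is one genuine gap and two places where your description of the mechanics does not match what is actually needed.

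\textbf{The choice $c\asymp\log\log x$ is wrong and would make the proof fail.} The unconditional tail from Lemma \ref{butprop3} carries the factor $\frac{x+1}{\sinh(c)}$, and this must end up below roughly $\sqrt{x}\log^2 x$. That forces $\sinh(c)\gtrsim\sqrt{x}$, i.e.\ $c\gtrsim\tfrac12\log x$. If $c$ grows only like $\log\log x$, then $1/\sinh(c)$ is merely polynomially small in $\log x$ and the term $(x+1)/\sinh(c)$ swamps $\sqrt{x}\log^2 x$ entirely; the caveat ``provided $x$ is not too large relative to $T$'' does not rescue this, because the constraint $c/\varepsilon\le T$ says nothing about the raw $x$ factor in the tail. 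The paper takes $c(x)=\tfrac12\log x + D$ with $D$ a small constant, and correspondingly $\varepsilon(x)=\frac{\log^{3/2}x\,\log\log x}{E\sqrt x}$; it is the $\log\log x$ in the numerator of $\varepsilon$ (equivalently, in the denominator of $c/\varepsilon$) — not in $c$ — that produces the $\log\log x$ in the final condition $\frac{9.06}{\log\log x}\sqrt{x/\log x}\le T$. Relatedly, the paper does not set $\varepsilon=c/T$; it defines $\varepsilon$ purely as a function of $x$ and then checks $c/\varepsilon\le T$ follows from the hypothesis, which keeps every error term an explicit function of $x$ alone.

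Two smaller mismatches worth noting. First, the iterative argument is not about feeding bounds back into $B_0$ or into partial summation. It is simpler: once one proves the target inequality for $x$ up to some $x_{\max}$, one can restart the whole argument assuming $x>A$ with $A$ taken as large as $x_{\max}$, which makes the various ``evaluate at $x=A$'' steps give smaller numerical constants and hence a smaller admissible $B$ in $\frac{B}{\log\log x}\sqrt{x/\log x}\le T$; the constants $(A,B,C,D,E)$ are re-tuned at each pass until $B$ stabilises at $9.06$. Second, for the $\theta$-bound one cannot afford to prove only $|\psi(x)-x|<\frac{\sqrt x}{8\pi}\log^2x$ and then ``absorb'' $\psi-\theta\asymp\sqrt x$ into slack, since $\sqrt x$ is the same order as the leading term divided by $\log^2 x$ but not asymptotically negligible relative to the \emph{gap} between the bound and the truth. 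The paper instead proves the stronger intermediate estimate $|\psi(x)-x|<\frac{\sqrt x}{8\pi}\log x(\log x - C)$ with $C\approx 2.4$, and then uses an explicit bound (Broadbent et al.) $\psi(x)-\theta(x)<(1+o(1))\sqrt x$ together with the $C-2$ margin to conclude $|\theta(x)-x|<\frac{\sqrt x}{8\pi}\log x(\log x-2)$; this $-2$ is then exactly what partial summation consumes in passing to $\pi$. Your proposed route via $\Pi-\pi$ would also work in principle, but you would still need the explicit intermediate bound with a margin — a single bound $|\psi(x)-x|<\frac{\sqrt x}{8\pi}\log^2 x$ with no room to spare is not enough.
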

\begin{proof}
    Throughout this proof we will label specific constants $A$, $B$, $C$, $D$ and $E$. This is done to make it clear where optimisations are being made and to allow us to perform an iterative argument.
    
    Now, by Proposition \ref{buthethm} it suffices to consider $x>A$ where $A=2.169\cdot 10^{25}$. We also initially restrict ourselves to $x$ such that
    \begin{equation}\label{beq}
        \frac{B}{\log\log x}\sqrt{\frac{x}{\log x}}\leq T,
    \end{equation}
    where $B=9.65$ and later reduce the value of $B$. We first prove the bound 
    \begin{equation}\label{strongpsieq}
        |\psi(x)-x|<\frac{\sqrt{x}}{8\pi}\log x(\log x-C),\quad\text{for }x>A,
    \end{equation}
    where $C=2.44$. Next, we define
    \begin{equation}\label{ceeq}
        c(x)=\frac{1}{2}\log x+D,\qquad \varepsilon(x)=\frac{\log^{3/2}x\log\log x}{E\sqrt{x}},
    \end{equation}
    where $D=6$ and $E=16$. To simplify notation we write $c=c(x)$ and $\varepsilon=\varepsilon(x)$. Note that for these choices of $D$ and $E$, we have $c>35$, $\varepsilon<4.9\cdot 10^{-11}$ and
    \begin{equation*}
        \frac{c}{\varepsilon}\leq\left(\frac{1}{2}E+\frac{DE}{\log A}\right)\frac{1}{\log\log x}\sqrt{\frac{x}{\log x}}\leq\frac{B}{\log\log x}\sqrt{\frac{x}{\log x}}\leq T.
    \end{equation*}
    Hence we may assume $\Re(\rho)=\frac{1}{2}$ for zeros $\rho$ with $|\Im(\rho)|\leq\frac{c}{\varepsilon}$.
    
    Now, recall from \eqref{vonmaneq} that
    \begin{equation*}
        x-\psi_{c,\varepsilon}(x)=\sum_{\rho}\frac{a_{c,\varepsilon}(\rho)}{\rho}x^\rho+\Theta(2).
    \end{equation*}
    To prove \eqref{strongpsieq} we split $\sum_{\rho}\frac{a_{c,\varepsilon}(\rho)}{\rho}x^\rho$ into three parts and then bound $|\psi(x)-\psi_{c,\varepsilon}(x)|$. For $|\Im(\rho)|>c/\varepsilon$, Lemma \ref{butprop3} gives
    \begin{align}
        \sum_{|\Im(\rho)|>\frac{c}{\varepsilon}}\left|a_{c,\varepsilon}(\rho)\frac{x^{\rho}}{\rho}\right|&\leq 0.16\frac{x+1}{\sinh(c)}e^{0.71\sqrt{c\varepsilon}}\log(3c)\log\left(\frac{c}{\varepsilon}\right)\notag\\
        &\leq\mathcal{E}_1(x),\label{eps1eq}
    \end{align}
    where 
    \begin{equation*}
        \mathcal{E}_1(x)=0.000032\sqrt{x}\log x\log\log x.
    \end{equation*}
    The inequality in \eqref{eps1eq} follows by noticing that for $x>A$
    \begin{equation*}
        \left(\frac{x+1}{\sinh(c)}\right)/\sqrt{x},\ e^{0.71\sqrt{c\varepsilon}} \text{ and } \log(3c)/\log\log x
    \end{equation*}
    are all decreasing functions and $\log(\frac{c}{\varepsilon})/\log x\leq\frac{1}{2}$. Substituting $x=A$ into 
    \begin{equation*}
        \frac{0.16\frac{x+1}{\sinh(c)}e^{0.71\sqrt{c\varepsilon}}\log(3c)}{\sqrt{x}\log x}\times\frac{1}{2}
    \end{equation*}
    then gives $0.0000316\ldots\leq 0.000032$.
    
    When $\frac{\sqrt{2c}}{\varepsilon}<\Im(\rho)<\frac{c}{\varepsilon}$ we use the second part of Lemma \ref{butprop3} with $a=\sqrt{\frac{2}{c}}$ to obtain
    \begin{align}
        \sum_{\frac{\sqrt{2c}}{\varepsilon}<|\Im(\rho)|\leq\frac{c}{\varepsilon}}\left|a_{c,\varepsilon}(\rho)\frac{x^{\rho}}{\rho}\right|&\leq\frac{1+11c\varepsilon}{2\pi}\log\left(\frac{c}{\varepsilon}\right)\frac{\cosh(c\sqrt{1-a^2})}{\sinh(c)}\sqrt{x}\label{eps2eq1}\\
        &\leq\mathcal{E}_2(x),\label{eps2eq2}
    \end{align}
    where 
    \begin{equation*}
        \mathcal{E}_2(x)=0.0293\sqrt{x}\log x.
    \end{equation*}
    For the inequality in \eqref{eps2eq2} we note that similarly to before $\frac{1+11c\varepsilon}{2\pi}$ is decreasing and $\log\left(\frac{c}{\varepsilon}\right)/\log x\leq\frac{1}{2}$ for $x>A$. Then,
    \begin{align*}
        \frac{\cosh(c\sqrt{1-a^2})}{\sinh(c)}&=\frac{e^{\sqrt{\frac{1}{2}\log x+D}\sqrt{\frac{1}{2}\log x+D-2}}+e^{-\sqrt{\frac{1}{2}\log x+D}\sqrt{\frac{1}{2}\log x+D-2}}}{e^{\frac{1}{2}\log x+D}-e^{-\frac{1}{2}\log x-D}}\\
        &\leq\frac{e^{\frac{1}{2}\log x+D-1}+e^{-\sqrt{\frac{1}{2}\log x+D}\sqrt{\frac{1}{2}\log x+D-2}}}{e^{\frac{1}{2}\log x+D}}\\
        &=\frac{1}{e}+\frac{1}{e^{\sqrt{\frac{1}{2}\log x+D}\sqrt{\frac{1}{2}\log x+D-2}+\frac{1}{2}\log x+D}}
    \end{align*}
    which is also decreasing. Substituting $x=A$ into 
    \begin{equation*}
        \frac{1}{2}\cdot\frac{1+11c\varepsilon}{2\pi}\cdot\left(\frac{1}{e}+\frac{1}{\exp\left(\sqrt{\frac{1}{2}\log x+D}\sqrt{\frac{1}{2}\log x+D-2}+\frac{1}{2}\log x+D\right)}\right)
    \end{equation*}
    then gives $0.0292\ldots\leq 0.0293$.
    
    Next, we consider the range $0<|\Im(\rho)|\leq\frac{\sqrt{2c}}{\varepsilon}$. Note that
    \begin{align}
        a_{c,\varepsilon}(\rho)&=\frac{\sqrt{\varepsilon^2/4+c^2}}{\sinh(\sqrt{\varepsilon^2/4+c^2})}\times\frac{\sin(\sqrt{\Im(\rho)^2\varepsilon^2-c^2})}{\sqrt{\Im(\rho)^2\varepsilon^2-c^2}}\notag\\
        &=\frac{\sqrt{\varepsilon^2/4+c^2}}{\sinh(\sqrt{\varepsilon^2/4+c^2})}\times\frac{\sinh(\sqrt{c^2-\Im(\rho)^2\varepsilon^2})}{\sqrt{c^2-\Im(\rho)^2\varepsilon^2}}\label{imeq}\\
        &\leq\frac{c}{\sinh(c)}\times\frac{\sinh(c)}{c}=1\label{decreasingeq}.
    \end{align}
    In particular, \eqref{imeq} follows since $|\Im(\rho)|\leq\frac{c}{\varepsilon}\leq T$ and \eqref{decreasingeq} follows since $\frac{x}{\sinh(x)}$ is decreasing for $x>0$. Hence $|a_{c,\varepsilon}(\rho)/\rho|\leq 1/|\Im(\rho)|$ and so Lemma \ref{butlemma3} gives
    \begin{align}
        \sum_{0<|\Im(\rho)|\leq\frac{\sqrt{2c}}{\varepsilon}}\left|a_{c,\varepsilon}(\rho)\frac{x^{\rho}}{\rho}\right|&\leq\frac{\sqrt{x}}{2\pi}\log\left(\frac{\sqrt{2c}}{2\pi\varepsilon}\right)^2\notag\\
        &=\frac{\sqrt{x}}{2\pi}\log\left(\frac{E\sqrt{x}\sqrt{\log x+2D}}{2\pi\log^{3/2} x\log\log x}\right)^2\notag\\
        &\leq\frac{\sqrt{x}}{2\pi}\log\left(\frac{E\sqrt{x}\sqrt{\log x+2D\frac{\log x}{\log(A)}}}{2\pi\log^{3/2}x\log\log x}\right)^2\notag\\
        &\leq\frac{\sqrt{x}}{2\pi}\left(\frac{1}{2}\log x+\log(2.8)-\log\log x-\log\log\log x\right)^2\notag\\
        &\leq\frac{\sqrt{x}}{8\pi}\log^2x+\mathcal{E}_3(x),\label{e3eq}
    \end{align}
    where 
    \begin{equation*}
        \mathcal{E}_3(x):=\frac{\sqrt{x}}{2\pi}\left(\frac{1}{2}\log x+\log(2.8)-\log\log x-\log\log\log x\right)^2-\frac{\sqrt{x}}{8\pi}\log^2x.
    \end{equation*}
    We now bound $|\psi(x)-\psi_{c,\varepsilon}(x)|$. By Lemma \ref{butprop5}
    \begin{equation}\label{v0eq}
        \frac{0.98}{\sqrt{2\pi c}}\leq \frac{I_1(c)}{2\sinh(c)}\leq\frac{1}{\sqrt{2\pi c}}.
    \end{equation}
    Combining \eqref{v0eq} and Lemma \ref{butprop4} with our definition \eqref{ceeq} of $\varepsilon$ then gives
    \begin{align}\label{explicitpsibound}
        |\psi(x)-\psi_{c,\varepsilon}(x)|\leq\frac{2.0001\sqrt{x}\log^{5/2}x\log\log x}{E\sqrt{\pi(\log x+2D)}}\log\left(\frac{0.97\sqrt{x}\log^{3/2}x\log\log x}{E\sqrt{\pi(\log x+2D)}}\right)^{-1}\notag\\
        +\frac{2.02}{E}\log^{5/2}x\log\log x+0.51\log x\log\log(2x^2).
    \end{align}
    Since $x>A=2.169\cdot 10^{25}$, we have
    \begin{align*}
        \log\left(\frac{0.97\sqrt{x}\log^{3/2} x\log\log x}{E\sqrt{\pi(\log x+2D)}}\right)\geq\log(\sqrt{x})=\frac{1}{2}\log x.
    \end{align*}
    Hence, dividing the first summand in \eqref{explicitpsibound} by $\sqrt{x}\frac{\log^{3/2}x\log\log x}{\sqrt{\log x+2D}}$ gives
    \begin{align*}
        \frac{2.0001\log x}{E\sqrt{\pi}}\log\left(\frac{0.97\sqrt{x}\log^{3/2}x\log\log x}{E\sqrt{\pi(\log x+2D)}}\right)^{-1}&\leq\frac{2.0001\log x}{E\sqrt{\pi}}\times\frac{2}{\log x}\\
        &=0.141\ldots\leq 0.142.
    \end{align*}
    So if we define
    \begin{equation*}
        \mathcal{E}_4(x)=0.142\sqrt{x}\frac{\log^{3/2}x\log\log x}{\sqrt{\log x+2D}}
    \end{equation*}
    and
    \begin{equation*}
        \mathcal{E}_5(x):=\frac{2.02}{E}\log^{5/2}x\log\log x+0.51\log x\log\log(2x^2)+2
    \end{equation*}
    then
    \begin{equation}\label{e4e5eq}
        |\psi(x)-\psi_{c,\varepsilon}(x)|\leq \mathcal{E}_4(x)+\mathcal{E}_5(x).
    \end{equation}
    Thus, by \eqref{vonmaneq}, \eqref{eps1eq}, \eqref{eps2eq2}, \eqref{e3eq} and \eqref{e4e5eq}
    \begin{equation*}
        |\psi(x)-x|\leq\frac{\sqrt{x}}{8\pi}\log^2x+\mathcal{E}_1(x)+\mathcal{E}_2(x)+\mathcal{E}_3(x)+\mathcal{E}_4(x)+\mathcal{E}_5(x).
    \end{equation*}
    Now consider the function
    \begin{equation*}
        \mathcal{E}(x)=\frac{1}{\sqrt{x}\log x}(\mathcal{E}_1(x)+\mathcal{E}_2(x)+\mathcal{E}_3(x)+\mathcal{E}_4(x)+\mathcal{E}_5(x)).
    \end{equation*}
    Differentiating $\mathcal{E}(x)$ with respect to $y=\log x$ we see that $\mathcal{E}(x)$ is decreasing for $x>A$. Moreover, $\mathcal{E}(A)=-0.0976\ldots<-\frac{C}{8\pi}=-0.0970\ldots$. This proves \eqref{strongpsieq}. Letting $T=3\cdot 10^{12}$ in \eqref{beq} and using Proposition \ref{buthethm} then gives
    \begin{equation}\label{psifirstit}
        |\psi(x)-x|<\frac{\sqrt{x}}{8\pi}\log x(\log x-C),\quad\text{for }5000\leq x\leq 9.68\cdot 10^{25}.
    \end{equation}
    From \eqref{psifirstit}, we also obtain 
    \begin{equation}\label{thetafirstit}
        |\theta(x)-x|<\frac{\sqrt{x}}{8\pi}\log x(\log x-2),\quad\text{for }5000\leq x\leq 9.68\cdot 10^{25}.
    \end{equation}
    To see this, we use recent estimates by Broadbent et al. \cite[Corollary 5.1]{broadbent2021sharper} for $\psi(x)-\theta(x)$. Namely\footnote{This estimate is stated in \cite{broadbent2021sharper} for the unnormalised $\psi$ and $\theta$ functions. However, it also holds for the normalised functions whereby the difference $\psi(x)-\theta(x)$ is at most that in the unnormalised setting.}
    \begin{equation*}
        \psi(x)-\theta(x)<a_1x^{1/2}+a_2x^{1/3},
    \end{equation*}
    where for $x\geq e^{50}\approx 5.18\cdot 10^{21}$, we can take $a_1=1+1.93378\cdot 10^{-8}$ and $a_2=1.01718$. In particular, for $x>A$ we have $\psi(x)-\theta(x)\leq(C-2)\frac{\sqrt{x}}{8\pi}\log x$. Hence \eqref{thetafirstit} holds for $A<x\leq 9.68\cdot 10^{25}$ since
    \begin{equation*}
        |\theta(x)-x|\leq\psi(x)-\theta(x)+|\psi(x)-x|.
    \end{equation*}
    For the remaining values of $x$, we use Proposition \ref{buthethm}.
    
    We now repeat the entire proof with
    \begin{equation*}
        (A,B,C,D,E)=(9.68\cdot 10^{25},9.34,2.43,5,16).
    \end{equation*}
    The error terms then update to (with more precision added this time):
    \begin{align*}
        \mathcal{E}_1(x)&=0.0000839\sqrt{x}\log x\log\log x,\\
        \mathcal{E}_2(x)&=0.02928\sqrt{x}\log x,\\
        \mathcal{E}_3(x)&=\frac{\sqrt{x}}{2\pi}\left(\frac{1}{2}\log x+\log(2.751)-\log\log x-\log\log\log x\right)^2-\frac{\sqrt{x}}{8\pi}\log^2x,\\
        \mathcal{E}_4(x)&=0.1411\sqrt{x}\frac{\log^{3/2}x\log\log x}{\sqrt{\log x+10}},\\
        \mathcal{E}_5(x)&=0.12625\log^{5/2}x\log\log x+0.51\log x\log\log(2x^2)+2,\\
        \mathcal{E}(A)&=-0.0967\ldots
    \end{align*}
    and we get
    \begin{align*}
        |\psi(x)-x|<\frac{\sqrt{x}}{8\pi}\log x(\log x-C),\quad\text{for }5000\leq x\leq 1.03\cdot 10^{26},\\
        |\theta(x)-x|<\frac{\sqrt{x}}{8\pi}\log x(\log x-2),\quad\text{for }5000\leq x\leq 1.03\cdot 10^{26}.
    \end{align*}
    Iterating again with
    \begin{equation*}
        (A,B,C,D,E)=(1.03\cdot 10^{26},9.08,2.42,2.4,16.8)
    \end{equation*}
    followed by
    \begin{equation*}
        (A,B,C,D,E)=(1.096\cdot 10^{26},9.06,2.42,2.34,16.8)
    \end{equation*}
    we get
    \begin{align}
        |\psi(x)-x|<\frac{\sqrt{x}}{8\pi}\log x(\log x-C),\label{finalpsi}\\
        |\theta(x)-x|<\frac{\sqrt{x}}{8\pi}\log x(\log x-2)\label{finaltheta},
    \end{align}
    for $x\geq 5000$ and $\frac{9.06}{\log\log x}\sqrt{\frac{x}{\log x}}\leq T$. Combining \eqref{finalpsi} and \eqref{finaltheta} with Proposition \ref{buthethm} proves \eqref{psiin} and \eqref{thetain}.  Certainly, one could perform further iterations but this would produce a minimal improvement. 
    
    Now, using integration by parts 
    \begin{equation*}
        \li(x)-\li(a)=\frac{x}{\log x}+\int_a^x\frac{\mathrm{d}t}{\log^2t}-\frac{a}{\log a}
    \end{equation*}
    so that by partial summation
    \begin{equation*}
        \pi(x)-\pi(a)=\li(x)-\li(a)-\frac{x-\theta(x)}{\log x}+\frac{a-\theta(a)}{\log a}-\int_a^x\frac{t-\theta(t)}{t\log^2t}\mathrm{d}t.
    \end{equation*}
    Hence, for $5000\leq x$ and $\frac{9.06}{\log\log x}\sqrt{\frac{x}{\log x}}\leq T$,
    \begin{align*}
        |\pi(x)-\li(x)|&\leq\frac{\sqrt{x}}{8\pi}(\log x-2)+\left|\pi(5000)-\li(5000)-\frac{\theta(5000)-5000}{\log(5000)}\right|+\frac{\sqrt{x}}{4\pi}-\frac{\sqrt{5000}}{4\pi}\\
        &=\frac{\sqrt{x}}{8\pi}\log x+4.91...-\frac{\sqrt{5000}}{4\pi}\\
        &=\frac{\sqrt{x}}{8\pi}\log x+4.91...-5.62...\\
        &<\frac{\sqrt{x}}{8\pi}\log x.
    \end{align*}
    Making use of \eqref{finalpsi} as opposed to \eqref{finaltheta} then gives $|\Pi(x)-\li(x)|<\frac{\sqrt{x}}{8\pi}\log x$.
    Combined with Proposition \ref{buthethm} we obtain \eqref{piin1} and \eqref{piin2} thereby completing the proof of the theorem.
\end{proof}
Setting $T=3\cdot 10^{12}$ we obtain the following result.
\begin{corollary}\label{maincor}
    The bounds \eqref{psiin}-\eqref{piin2} hold for $x\leq 1.101\cdot 10^{26}$.
\end{corollary}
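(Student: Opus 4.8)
The plan is to substitute $T = 3\cdot 10^{12}$ into the hypothesis of Theorem \ref{mainthm}, so that the only remaining task is to identify the largest $x$ for which
\[
f(x) := \frac{9.06}{\log\log x}\sqrt{\frac{x}{\log x}} \leq 3\cdot 10^{12}.
\]
First I would record that $f$ is increasing for $x \geq e^{4}$. Indeed, writing $y = \log x$ one has $f(x)^{2} = (9.06)^{2}\, e^{y}/\bigl(y(\log y)^{2}\bigr)$, whose logarithmic derivative in $y$ equals $1 - 1/y - 2/(y\log y)$; this is positive once $y \geq 4$, since then $1/y + 2/(y\log y) \leq 1/4 + 2/(4\log 4) < 1$. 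Because every lower threshold occurring in \eqref{psiin}--\eqref{piin2} exceeds $e^{4}$, it therefore suffices to check the displayed inequality at the single point $x = 1.101\cdot 10^{26}$.

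Next I would evaluate $f$ there. Using $\log(1.101\cdot 10^{26}) = 26\log 10 + \log(1.101) = 59.96\ldots$ and $\log\log(1.101\cdot 10^{26}) = 4.09\ldots$, a direct computation gives $\sqrt{x/\log x} = 1.35\ldots\cdot 10^{12}$ and hence $f(1.101\cdot 10^{26}) = 2.99\ldots\cdot 10^{12} < 3\cdot 10^{12}$, all roundings being taken so as to bound $f$ from above. By monotonicity, $f(x) \leq 3\cdot 10^{12}$ throughout $59 \leq x \leq 1.101\cdot 10^{26}$, so Theorem \ref{mainthm} --- whose statement already incorporates the lower thresholds $x \geq 59$, $x \geq 599$ and $x \geq 2657$ --- yields the four estimates \eqref{psiin}--\eqref{piin2} on this range. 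A companion evaluation at, say, $x = 1.102\cdot 10^{26}$ shows that $f$ exceeds $3\cdot 10^{12}$ there, confirming that $1.101\cdot 10^{26}$ is the best cutoff obtainable from Theorem \ref{mainthm} at the stated precision.

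There is essentially no obstacle here: the corollary is a one-point numerical check once Theorem \ref{mainthm} is available. The only steps demanding any care are verifying the monotonicity of $f$, so that testing the right endpoint is legitimate, and making sure the numerical evaluation of $f(1.101\cdot 10^{26})$ is carried out as a rigorous upper bound rather than a mere approximation.
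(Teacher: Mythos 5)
Your proposal is correct and reconstructs precisely the argument the paper omits: the paper gives no explicit proof of Corollary~\ref{maincor}, merely prefacing it with ``Setting $T=3\cdot 10^{12}$ we obtain the following result,'' and your work of substituting $T = 3\cdot 10^{12}$, checking monotonicity of the left side of \eqref{djbound} in $x$, and evaluating at $x = 1.101\cdot 10^{26}$ is exactly the implicit calculation. The care you take with rigorous upper-bound rounding and with verifying that the lower thresholds $59$, $599$, $2657$ all exceed $e^4$ is appropriate and in the spirit of the paper.
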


\section{Possible improvements and variations}\label{sectimprov}

\subsection{Improvements for larger $T$}
The constant 9.06 appearing in \eqref{djbound} can be lowered if the Riemann hypothesis were verified to a higher height. This is because a higher value of $T$ means that the bounds \eqref{psiin}--\eqref{piin2} hold for larger values of $x$ thereby giving sharper error terms in the proof of Theorem \ref{mainthm}. Table \ref{ttable} lists improvements that one would get by increasing $T$ to $10^{13}$, $10^{14}$ and $10^{15}$. The values in the table were computed using the same iterative method as in the proof of Theorem \ref{mainthm}. 

\def\arraystretch{1.5}
\begin{table}[h]
\centering
\caption{Value of $K$ such that \eqref{psiin}--\eqref{piin2} hold for $\frac{K}{\log\log x}\sqrt{\frac{x}{\log x}}\leq T$ when $T\geq T_0$. Here, $x_{\text{max}}$ is the largest value of $x$ for which this inequality holds when $T=T_0$.}
\begin{tabular}{|c|c|c|}
\hline
$T_0$ & $K$ & $x_{\text{max}}$ \\
\hline
$ 10^{13} $& $8.94$ & $1.335\cdot 10^{27}$\\
\hline
$ 10^{14} $& $8.76$ & $1.550\cdot 10^{29}$\\
\hline
$ 10^{15} $& $8.64$ & $1.762\cdot 10^{31}$\\
\hline
\end{tabular}
\label{ttable}
\end{table}

\subsection{Weakening the constant}

Using the methods in Section \ref{sectmain}, we can obtain weaker bounds that hold for larger ranges of $x$. Here, the main idea is to alter the definition of $\mathcal{E}_3(x)$ and thereby change the leading term in \eqref{e3eq}. Doing this with the constant changed from $1/8\pi$ to a selection of larger values, we obtained the following result.
\begin{theorem}\label{weakthm}
     Let $T>0$ be such that the Riemann hypothesis holds for zeros $\rho$ with $0<\Im(\rho)\leq T$. Then, for corresponding values of $a$ and $K$ in Table \ref{constanttable}, the following estimates hold:
    \begin{align}
        |\psi(x)-x|&<a\sqrt{x}\log^2x,&\text{for }x\geq 3,\label{psiin2}\\
        |\theta(x)-x|&<a\sqrt{x}\log^2x,&\text{for }x\geq 3,\label{thetain2}\\
        |\Pi(x)-\li(x)|&<a\sqrt{x}\log x,&\text{for }x\geq 2\label{piin12},\\
        |\pi(x)-\li(x)|&<a\sqrt{x}\log x,&\text{for }x\geq 2\label{piin22},
    \end{align}
    provided $K\sqrt{\frac{x}{\log^3x}}\leq T$.
\end{theorem}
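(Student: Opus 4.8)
The plan is to re-run the proof of Theorem \ref{mainthm} almost verbatim, altering only the choice of $\varepsilon(x)$ and, correspondingly, the way the central block of zeros is bounded in \eqref{e3eq}. As there, each of \eqref{psiin2}--\eqref{piin22} is established in three ranges. For the finitely many values of $x$ below the range of Proposition \ref{buthethm} (at most $x<2657$, depending on which of $\psi,\theta,\Pi,\pi$ is being treated), the inequality is verified by a direct finite computation; this is easy since $a$ is comparatively large. For $x$ inside the range of Proposition \ref{buthethm}, up to $A:=2.169\cdot10^{25}$, its bounds already carry the constant $1/8\pi<a$ and so are of the required form. It therefore remains to treat $x>A$ by the analytic method.

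There, keep $c(x)=\tfrac{1}{2}\log x+D$ but replace the choice of $\varepsilon$ by
\[
\varepsilon(x)=\frac{\log^{5/2}x}{E\sqrt{x}},
\]
with $D,E>0$ to be optimised. This shape is essentially forced by the hypothesis: with it one has $c/\varepsilon\le\bigl(\tfrac{1}{2}E+\tfrac{DE}{\log A}\bigr)\sqrt{x/\log^{3}x}$, so the assumption $K\sqrt{x/\log^{3}x}\le T$, with $K$ taken slightly larger than $E/2$, gives $c/\varepsilon\le T$ and hence $\Re(\rho)=\tfrac{1}{2}$ for all $|\Im(\rho)|\le c/\varepsilon$. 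One checks that $\varepsilon$ is decreasing for $x>A$ with $\varepsilon(A)<10^{-4}$, so the smallness hypotheses in \eqref{vonmaneq} and in Lemmas \ref{butprop3} and \ref{butprop4} hold throughout $x>A$ (this is exactly why the analytic range must begin at $A$ and Proposition \ref{buthethm} is needed below it). Now split $\sum_{\rho}a_{c,\varepsilon}(\rho)x^{\rho}/\rho$ over $|\Im(\rho)|>c/\varepsilon$, over $\sqrt{2c}/\varepsilon<|\Im(\rho)|\le c/\varepsilon$, and over $0<|\Im(\rho)|\le\sqrt{2c}/\varepsilon$, exactly as in Theorem \ref{mainthm}.

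The first two ranges again give error terms $\mathcal{E}_1(x),\mathcal{E}_2(x)$ of sizes $O(\sqrt{x}\log x\log\log x)$ and $O(\sqrt{x}\log x)$. The modification enters in the third range, where Lemma \ref{butlemma3} gives
\[
\sum_{0<|\Im(\rho)|\le\sqrt{2c}/\varepsilon}\Bigl|a_{c,\varepsilon}(\rho)\frac{x^{\rho}}{\rho}\Bigr|\le\frac{\sqrt{x}}{2\pi}\log\Bigl(\frac{\sqrt{2c}}{2\pi\varepsilon}\Bigr)^{2}\le a\sqrt{x}\log^{2}x+\mathcal{E}_3(x),
\]
where now $\mathcal{E}_3(x):=\tfrac{\sqrt{x}}{2\pi}\log(\sqrt{2c}/2\pi\varepsilon)^{2}-a\sqrt{x}\log^{2}x$. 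Since $\log(\sqrt{2c}/2\pi\varepsilon)=\tfrac{1}{2}\log x-2\log\log x+O(1)$, we get $\mathcal{E}_3(x)=\bigl(\tfrac{1}{8\pi}-a\bigr)\sqrt{x}\log^{2}x+O(\sqrt{x}\log x\log\log x)$, which for $a>1/8\pi$ is negative and of order $\sqrt{x}\log^{2}x$, providing a large amount of slack. Bounding $|\psi(x)-\psi_{c,\varepsilon}(x)|$ by Lemmas \ref{butprop4} and \ref{butprop5} gives $\mathcal{E}_4(x)+\mathcal{E}_5(x)$ as before, but here (the price of the larger $\varepsilon$) the dominant term $\mathcal{E}_4(x)$ is itself of order $\sqrt{x}\log^{2}x$, with leading coefficient $\approx 4/(E\sqrt{\pi})$. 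Hence $\mathcal{E}(x):=(\mathcal{E}_1+\dots+\mathcal{E}_5)/(\sqrt{x}\log x)$ behaves like $\bigl(\tfrac{1}{8\pi}-a+\tfrac{4}{E\sqrt{\pi}}\bigr)\log x$ as $x\to\infty$, so choosing $a>\tfrac{1}{8\pi}+\tfrac{4}{E\sqrt{\pi}}$ makes $\mathcal{E}(x)<0$ for all large $x$; the same monotonicity-in-$\log x$ argument together with a single numerical check $\mathcal{E}(A)<0$ then extends this to every $x>A$. This yields $|\psi(x)-x|<a\sqrt{x}\log^{2}x$ for $x>A$; in fact, as in Theorem \ref{mainthm}, one proves the sharper intermediate estimates $|\psi(x)-x|<a\sqrt{x}\log x(\log x-C)$ and, via the Broadbent et al.\ bound on $\psi(x)-\theta(x)$ invoked there, $|\theta(x)-x|<a\sqrt{x}\log x(\log x-2)$, after which the $\Pi$ and $\pi$ bounds follow by the integration-by-parts and partial-summation computation at the end of the proof of Theorem \ref{mainthm} ($\pi$ from $\theta$, $\Pi$ from $\psi$). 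If one wishes to shrink $K$ for a fixed $a$ (or $a$ for a fixed $K$), one iterates, enlarging $A$ at each step; the rows of Table \ref{constanttable} are the optimised pairs obtained this way.

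The main obstacle is not analytic but one of calibration and bookkeeping. The constraint $a>\tfrac{1}{8\pi}+\tfrac{4}{E\sqrt{\pi}}$ together with $K\gtrsim E/2$ encodes a genuine trade-off: $a$ cannot be pushed close to $1/8\pi$ without taking $E$, and hence $K$, large, which strengthens the hypothesis on $T$ and shrinks the admissible range of $x$. Pinning down, for each target constant $a$, the smallest admissible $K$ (with the accompanying $D$, $E$, and the right number of iterations) for which $\mathcal{E}(A)<0$ genuinely holds is the step that must be carried out by explicit numerical estimation. The finitely many small-$x$ verifications, and the claims that the relevant auxiliary functions are monotone in $\log x$ for $x>A$, are routine, and everything in between is a direct transcription of Section \ref{sectmain}.
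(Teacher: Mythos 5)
Your proposal follows the paper's own argument essentially verbatim: keep $c(x)=\tfrac12\log x+D$, switch to $\varepsilon(x)=\log^{5/2}x/(E\sqrt{x})$ (so that $c/\varepsilon\lesssim\sqrt{x/\log^3 x}\le T$), rework the five error terms with the main changes occurring in $\mathcal{E}_3$ and $\mathcal{E}_4$, show $\mathcal{E}(x)$ is decreasing and check it at $x=A$, iterate to shrink $K$, and handle the small-$x$ ranges by Proposition \ref{buthethm} and a finite check. The one small difference is that the paper starts its iteration from $A=1.101\cdot 10^{26}$ by invoking Corollary \ref{maincor} (hence the already-proved Theorem \ref{mainthm}), rather than from the smaller $A=2.169\cdot 10^{25}$ of Proposition \ref{buthethm} as you do, which makes the first numerical check slightly easier and saves an iteration.
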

\begin{proof}
    Let $(a,K)=(1,1.19)$. For other values of $a$ and $K$ the method of proof is essentially identical. We use the same general reasoning as in the proof of Theorem \ref{mainthm}. Hence we only describe the small modifications required in this setting.
    
    Firstly, the minimum values for $x$ appearing in \eqref{psiin2}--\eqref{piin22} were obtained by checking each expression manually up to the minimum values appearing in \eqref{psiin}--\eqref{piin2}. We then let 
    \begin{equation}\label{ceeq2}
        c(x)=\frac{1}{2}\log x+D,\qquad \varepsilon(x)=\frac{\log^{5/2}x}{E\sqrt{x}},
    \end{equation}
     initially setting $D=0$ and $E=2.4$. Each of the error terms $\mathcal{E}_1(x),\ldots,\mathcal{E}_5(x)$ changed slightly due to the new choice of $\varepsilon(x)$ in \eqref{ceeq2}. The main difference occurred with $\mathcal{E}_3(x)$ and $\mathcal{E}_4(x)$, now given by
    \begin{align*}
        \mathcal{E}_3(x)&:=\frac{\sqrt{x}}{2\pi}\left(\frac{1}{2}\log x+\log(\alpha)-2\log\log x\right)^2-a\sqrt{x}\log^2x,\\
        \mathcal{E}_4(x)&:=\beta\sqrt{x}\log^2x
    \end{align*}
    for some computable constants $\alpha$ and $\beta$. In particular, this definition of $\mathcal{E}_3(x)$ gives
    \begin{equation*}
        \sum_{0<|\Im(\rho)|\leq\frac{\sqrt{2c}}{\varepsilon}}\left|a_{c,\varepsilon}(\rho)\frac{x^{\rho}}{\rho}\right|\leq a\sqrt{x}\log^2x+\mathcal{E}_3(x).
    \end{equation*}
    For the iterative process we started with $A=1.101\cdot 10^{26}$ (as per Corollary \ref{maincor}), $B=1.2$, $C=2.017$, $D=0$ and $E=2.4$. Here, $B$ was such that the inequalities \eqref{psiin2}--\eqref{piin22} held for $B\sqrt{\frac{x}{\log(x)^3}}\leq T$ and $C$ was such that 
    \begin{align*}
        |\psi(x)-x|<a\sqrt{x}\log x(\log x-C),
    \end{align*}
    held for each $x$ in this range. For the second iteration we used
    \begin{equation*}
            (A,B,C,D,E)=(2.128\cdot 10^{30},1.19,2.015,0,2.38)
    \end{equation*}
    which gave the desired result.
\end{proof}
\begin{remark}
    In the above proof we fixed $D=0$. A small improvement is possible if we allowed $D$ to be negative. However, this requires reworking several inequalities from the proof of Theorem \ref{mainthm} so we decided not to do so here.
\end{remark}

\def\arraystretch{1.5}
\begin{table}[h]
\centering
\caption{Corresponding values of $a$ and $K$ for Theorem \ref{weakthm}. The value $x_{\text{max}}$ is the largest $x$ for which the inequalities \eqref{psiin2}--\eqref{piin22} hold upon setting $T=3\cdot 10^{12}$.}
\begin{tabular}{|c|c|c||c|c|c|}
\hline
$a$ & $K$ & $x_{\text{max}}$ & $a$ & $K$ & $x_{max}$\\
\hline
$ 1 $& $1.19$ & $2.165\cdot 10^{30}$ & $ 10^{4} $& $1.16\cdot 10^{-4}$ & $4.723\cdot 10^{38}$\\
\hline
$ 10 $& $0.117$ & $2.738\cdot 10^{32}$ & $10^5$ & $1.16\cdot 10^{-5}$ & $5.522\cdot 10^{40}$\\
\hline
$ 100 $& $0.0116$ & $3.360\cdot 10^{34}$ & $10^6$ & $1.16\cdot 10^{-6}$ & $6.404\cdot 10^{42}$\\
\hline
$ 1000 $& $0.00116$ & $4.004\cdot 10^{36}$ & $10^7$ & $1.16\cdot 10^{-7}$ &$7.375\cdot 10^{44}$\\
\hline
\end{tabular}
\label{constanttable}
\end{table}

\section{An inequality of Ramanujan}\label{appsec}
In one of his notebooks, Ramanujan proved that the inequality
\begin{equation}\label{rameq}
    \pi(x)^2<\frac{ex}{\log x}\pi\left(\frac{x}{e}\right)
\end{equation}
holds for sufficiently large $x$ (see \cite[pp 112--114]{berndt2012ramanujan}). Several authors (\cite{dudek2015solving}, \cite{axler2017estimates}, \cite{platt2021error}, \cite{hassani2021remarks}) have attempted to make \eqref{rameq} completely explicit. It is widely believed that the last integer counterexample occurs at $x=38,358,837,682$. In fact, this follows under assumption of the Riemann hypothesis \cite[Theorem 1.3]{dudek2015solving}. 

The best unconditional result is due to Platt and Trudgian \cite[Theorem 2]{platt2021error}. In particular, they show that \eqref{rameq} holds for both $38,358,837,683\leq x\leq\exp(58)$ and $x\geq\exp(3915)$. Our bounds on $\pi(x)$ allow for a significant improvement on the first of these results. To demonstrate this, we use a simple (but computationally intensive) method to verify \eqref{rameq}, obtaining the following result.
\begin{theorem}\label{ramthm}
    For $38,358,837,683\leq x\leq\exp(103)$, Ramanujan's inequality \eqref{rameq} holds unconditionally.
\end{theorem}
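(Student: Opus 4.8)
The plan is to reduce Ramanujan's inequality \eqref{rameq} to an inequality between explicit analytic functions by replacing $\pi$ with $\li$ and absorbing the discrepancy into the error bounds of Corollary \ref{maincor} and Theorem \ref{weakthm}, verifying the resulting inequality over the range $e^{58}\le x\le e^{103}$ by a rigorous computation and citing \cite[Theorem 2]{platt2021error} below $e^{58}$. For the reduction, fix a real $x$ and work with the standard prime counting function, for which the bounds of Theorem \ref{mainthm} still hold (Section \ref{sectnot}). Writing $\pi(y)=\li(y)+E(y)$ with $|E(y)|<\varepsilon(y)$, where $\varepsilon(y)=\frac{\sqrt y}{8\pi}\log y$ for $y\le1.101\cdot10^{26}$ (Corollary \ref{maincor}) and $\varepsilon(y)=a\sqrt y\log y$ on the successive ranges $y\le x_{\max}(a)$ of Table \ref{constanttable}, one checks that for $x\ge e^{58}$ (so $x/e\ge e^{57}$, and the chosen bound is valid at $x/e$ too) all of $\pi(x)$, $\li(x)\pm\varepsilon(x)$, $\li(x/e)\pm\varepsilon(x/e)$ are positive; hence $\pi(x)^2<(\li(x)+\varepsilon(x))^2$ and $\pi(x/e)>\li(x/e)-\varepsilon(x/e)$, and \eqref{rameq} follows, strictly, once one establishes
\begin{equation}\label{ramreduced}
    \bigl(\li(x)+\varepsilon(x)\bigr)^2\le\frac{ex}{\log x}\bigl(\li(x/e)-\varepsilon(x/e)\bigr),\qquad e^{58}\le x\le e^{103}.
\end{equation}

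To verify \eqref{ramreduced}, the key observation is that its two sides are close: with $\li(y)=\frac{y}{\log y}\sum_{k=0}^{N}\frac{k!}{\log^k y}+O\!\bigl(\frac{y}{\log^{N+1}y}\bigr)$ one finds that $\frac{ex}{\log x}\li(x/e)-\li(x)^2$ is positive and of order $x^2/\log^6 x$ (the asymptotic expansions of the two sides agree through the $x^2/\log^5x$ term and first differ one order later), whereas the perturbation from the error bounds, $2\varepsilon(x)\li(x)+\varepsilon(x)^2+\frac{ex}{\log x}\varepsilon(x/e)$, is $O\!\bigl((1+a)\,x^{3/2}\bigr)$. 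Since $e^{103}<7.375\cdot10^{44}=x_{\max}(10^7)$, the ranges from Corollary \ref{maincor} and Table \ref{constanttable} cover $[e^{58},e^{103}]$, and on each of them $x^2/\log^6 x$ beats $(1+a)x^{3/2}$ by several orders of magnitude (even for $a=10^7$ and $x\approx10^{44}$ one has $\sqrt x\gg a\log^6x$), so \eqref{ramreduced} holds. In practice this is carried out as a direct but computationally intensive verification: subdivide each of the finitely many sub-ranges finely, and on each piece bound the left side of \eqref{ramreduced} from above and the right side from below using verified (interval) arithmetic and the monotonicity of the constituent functions.

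For $38358837683\le x\le e^{58}$, \eqref{rameq} is exactly \cite[Theorem 2]{platt2021error}, so combining the two ranges finishes the proof. The main obstacle is not the analysis — once one sees that the Ramanujan gap is positive of size $x^2/\log^6x$, its dominance over the $x^{3/2}$-sized perturbation is automatic throughout $[e^{58},e^{103}]$ — but the rigour of the computation over so vast a range (floating point cannot be trusted, so verified arithmetic and a fine enough subdivision are essential), together with the fact that the genuinely delicate region near the last counterexample $x=38358837682$, where the margin in \eqref{rameq} is minuscule and the bound $\varepsilon(x)$ far too crude to detect it, is outsourced to Platt and Trudgian, who handled it using exact values of $\pi$.
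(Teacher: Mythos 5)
Your reduction---replace $\pi$ by $\li\pm\varepsilon$ with $\varepsilon$ taken from Corollary~\ref{maincor} and Table~\ref{constanttable} in successive ranges, then verify the resulting explicit inequality by rigorous computation over a fine grid using monotonicity---is exactly the paper's method (the paper writes the inequality in the variable $z=\log x$ as $f(z)>g(z)$ and steps with $\delta\approx5\cdot10^{-8}$, while you phrase it directly in $x$ and cite Platt--Trudgian for $x\le e^{58}$ rather than Axler for $x\le e^{43}$, but these are cosmetic differences). Your asymptotic observation that the Ramanujan gap is of size $x^2/\log^6x$ while the perturbation from the error bounds is only $O(a\,x^{3/2})$ is a nice explanation of \emph{why} the verified-arithmetic check is guaranteed to succeed across the whole range, but---as you correctly indicate---the proof itself still rests on the computation, not on the un-quantified big-$O$'s.
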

\begin{proof}
    For $38,358,837,682<x\leq\exp(43)$, the theorem follows from \cite[Theorem 3]{axler2017estimates}. Platt and Trudgian also prove \eqref{rameq} for $\exp(43)<x\leq\exp(58)$ but the author thought it would be instructive to re-establish their result.
    
    So, let $x>\exp(43)$ and write $z=\log x$. Then \eqref{rameq} is equivalent to
    \begin{equation}\label{zeq}
        \frac{e^{z+1}}{z}\pi(e^{z-1})-\pi(e^z)^2>0.
    \end{equation}
    Set $a=1/8\pi$. By Theorem \ref{mainthm} we have that $|\pi(x)-\li(x)|<a\sqrt{x}\log x$ for $\exp(43)<x\leq\exp(59)$. Thus, \eqref{zeq} is true in this range provided
    \begin{equation}\label{lieq}
        \frac{e^{z+1}}{z}\li(e^{z-1})-\frac{a(z-1)}{ z}e^{\frac{3z+1}{2}}-\left(\li(e^z)+aze^{z/2}\right)^2>0
    \end{equation}
    for $43<z\leq 59$. We write
    \begin{equation*}
        f(z)=\frac{e^{z+1}}{z}\li(e^{z-1}),\quad g(z)=\frac{a(z-1)}{ z}e^{\frac{3z+1}{2}}+\left(\li(e^z)+aze^{z/2}\right)^2
    \end{equation*}
    so that \eqref{lieq} is equivalent to $f(z)-g(z)>0$. Note that $f(z)$ and $g(z)$ are both increasing for $z>1$. Hence, if $f(z_0)>g(z_0+\delta)$ for some $z_0>1$ and $\delta>0$, then $f(z)>g(z)$ for every $z\in(z_0,z_0+\delta)$. We thus performed a ``brute force" verification by setting $\delta=5\cdot 10^{-8}$ and showing that
    \begin{equation*}
        f(43)-g(43+\delta)>0,\ f(43+\delta)-g(43+2\delta)>0,\ \ldots,\ f(59-\delta)-g(59)>0.
    \end{equation*}
    This was achieved using a short algorithm written in Python. The computations took just under a day on a 2.4GHz laptop.
    
    We then repeated the above argument using Theorem 4.1 with $a=1$ and a smaller $\delta=2.5\cdot 10^{-8}$. This proved \eqref{rameq} for $\exp(59)<x\leq\exp(69)$. Continuing in this fashion for each value of $a$ in Table 2 we see that \eqref{rameq} holds in the range $\exp(43)<x\leq\exp(103)$ as desired.
\end{proof}
Certainly one could extend Table \ref{constanttable} and the computations in the above proof. However this would require a large amount of computation time. Thus, to improve on Theorem \ref{ramthm} the author suggests switching to a more sophisticated and less computational method. For instance, one could attempt to modify the arguments in \cite[Section 6]{axler2017estimates} or \cite[Section 5]{platt2021error}.

\section{Future work}
There are several ways in which one could expand on the work in this paper, for instance:
\begin{enumerate}[label=(\arabic*)]
    \item One could produce a wider range of weakened bounds similar to those in Theorem \ref{weakthm}. For example, one could provide a more general expression for $K$ as a function of $a$.
    \item One could produce analogous results for primes in arithmetic progressions. To do this, one would need to rework the results in this paper and \cite{buthe2016estimating} using computations of zeros of Dirichlet $L$-functions (e.g.\ \cite{platt2016numerical}) and the explicit formula for $\psi(x,\chi)$ \cite[Chapter 19]{davenport2013multiplicative}. Then, if desired, one could also consider other types of $L$-functions.
    \item As discussed in Section \ref{appsec}, it is possible to improve Theorem \ref{ramthm} with some work. It would be interesting to optimise the results of this paper and those in \cite{platt2021error} to see how close one could get to making Ramanujan's inequality completely explicit.
\end{enumerate}

\section*{Acknowledgements}
Thanks to my supervisor Tim Trudgian for all of his wonderful suggestions and insights on this project.

\printbibliography
\end{document}